\numberwithin{equation}{section}
\newtheorem{theorem}[equation]{Theorem}
\newtheorem{lemma}[equation]{Lemma}
\newtheorem{proposition}[equation]{Proposition}
\newtheorem{corollary}[equation]{Corollary}
\theoremstyle{definition}
\newtheorem{definition}[equation]{Definition}
\theoremstyle{remark}
\newtheorem{remark}[equation]{Remark}
\begin{document}
\title[Markov traces]{\fontsize{9}{\baselineskip}\selectfont Markov traces on degenerate cyclotomic Hecke algebras}
\author{Deke Zhao}
\address{\bigskip\hfil\begin{tabular}{l@{}}
           Department of Mathematics\\
            Beijing Normal University at Zhuhai, Zhuhai, 519087\\
             China\\
             E-mail: \it deke@amss.ac.cn \hfill
          \end{tabular}}
\thanks{Supported by Guangdong Basic and Applied Basic Research Foundation
 (Grant No. 2023A1515010251), the National Natural Science Foundation of China (Grant No.  11871107), and the Open Project Program of Key Laboratory of Mathematics and Complex System of Beijing Normal University (Grant No. K202402)}
\subjclass[2020]{Primary 20C99, 16G99; Secondary 05A99, 20C15}
\dedicatory{Dedicated to Professor Yingbo Zhang on the occasion of her 80th birthday}
\keywords{Complex reflection group; Degenerate cyclotomic Hecke algebra; Jucys--Murphy element; Markov trace}
\vspace*{-3mm}
\begin{abstract}Let $H_n(\boldsymbol{u})$ be the degenerate cyclotomic Hecke algebra with parameter $\boldsymbol{u}=\{u_1, \ldots, u_m\}$. The paper aims to define and construct the (non-)normalized Markov traces on the sequence $\{H_n(\boldsymbol{u})\}_{n=1}^{\infty}$. This allows us to provide the Brou\'{e}--Malle--Michel symmetrizing trace on $H_n(\boldsymbol{u})$ and show that the Brundan--Kleshchev trace is a specialization of the non-normalized Markov traces on $H_n(\boldsymbol{u})$.
\end{abstract}
\maketitle
\section{Introduction}
Let $m,n$ be positive integers and let $W_{m,n}$ be the complex reflection group of type $G(m,1,n)$ in Shephard--Todd's classification \cite{ST}.
It is well-known that $W_{m,n}$ is isomorphic to the wreath product $(\mathbb{Z}/m\mathbb{Z})^{n}\rtimes \mathfrak{S}_{n}$, where  $\mathfrak{S}_{n}$ is the symmetric group of degree $n$ (generated by the simple transpositions $s_i=(i,i+1)$ for $i=1,\ldots,n-1$).

Let $\boldsymbol{u}=\{u_1, \ldots, u_m\}$ be a set of indeterminates and let $\mathbb{C}(\boldsymbol{u})$ be the fraction field of $\mathbb{C}[\boldsymbol{u}]$. The (generic) degenerate cyclotomic Hecke algebra $H_n(\boldsymbol{u})$ is the unitary  associative algebra over $\mathbb{C}(\boldsymbol{u})$ generated by $t, s_1, \ldots, s_{n-1}$ and subjected to relations:
\begin{eqnarray*}
&&(t-u_1)\dots(t-u_m)=0,\\
&&t(s_1ts_1+s_1)=(s_1ts_1+s_1)t,\\
&&ts_i=s_it\text{ for }1\leq i\leq n-1,\\
&&s_i^2=1 \text{ for } 1\le i\leq n-1,\\
&&s_is_{i+1}s_i=s_{i+1}s_is_{i+1} \text{ for  }1\le i\leq n-2,\\
&&s_is_j=s_js_i \text{ for }|i-j|\geq 2.
\end{eqnarray*}
It is known that $H_{n}(\boldsymbol{u})$ is free as a $\mathbb{C}(\boldsymbol{u})$-module of rank $|W_{m,n}|=m^nn!$ and is a deformation of $W_{m,n}$ (see \cite[\S7]{K}).

Let us remark that the degenerate cyclotomic Hecke algebras are degenerate versions of the Ariki--Koike algebras or the cyclotomic Hecke algebra of type $G(m,1,n)$, which originates from the works of Drinfel'd \cite{Drinfeld} and Cherednik \cite{Cherednik} and was mentioned explicitly in Grojnowski's preprint \cite{Grojnowski}, Kleshchev's book \cite{K} would be a good reference. These two classes of algebras are closely related, as evidenced by Brundan and Kleshchev's seminal work \cite{BK-Block},  which shows that both algebras are isomorphic to a cyclotomic  Khovanov--Lauda--Rouquier (KLR)  algebra where the weight is determined by the  ``cyclotomic parameters".  Statements that are regarded as theorems in the setting of the cyclotomic Hecke algebras are often adopted as statements in the setting of the degenerate cyclotomic Hecke algebras, and vice versa (see e.g. \cite[\S6]{AMR}, \cite{BK-Block, BK-Math.Z., Z1,Z2} etc.).

In \cite{Jones}, Jones  constructed the Jones polynomial for knots
in $S^3$ by using Ocneanu's Markov trace on the Iwahori--Hecke algebras of type
$A$ and posed the question about similar constructions on other Hecke algebras as well
as in other 3-manifolds. In \cite{La94} Lambropoulou constructed the first Markov trace on the Iwahori--Hecke algebras of type $B$ and related it to the knot theory of the solid torus.  Then Geck and Lambropoulou  gave a full classification of all Markov traces on these algebras in \cite{GL,Geck}. Furthermore Lambropoulou \cite{L} constructed the Markov traces on the cyclotomic Hecke algebras, Juyumaya \cite{Juyumaya} proved the existence
of Markov trace on the Yokonuma--Hecke algebras and so on.  Moreover, as we remarked in \cite[Remarks~5.7(ii)]{Z2}, it is interesting to investigate the ``Markov traces" on (cyclotomic) KLR algebras, which will be helpful to understand the (Markov) traces on (degenerate) cyclotomic Hecke algebras.

Inspired by the aforementioned works, it is natural and interesting to study Markov traces on  $H_n(\boldsymbol{u})$. The aim of the paper is to define and construct the (non-)normalized Markov traces on
$H_n(\boldsymbol{u})$ along the line of Lambropoulou's argument in \cite{L}, and investigate their specializations. A point should be noted that the most natural way to define and study the Markov traces on $H_n(\boldsymbol{u})$ is to combine the Brundan--Kleshchev isomorphism and Lambropoulou's  work \cite{L}. Unfortunately the isomorphism is too complicated to attack the problem. Roughly speaking, Brundan and Kleshchev constructed the Khovanov--Lauda generators of the cylotomic Hecke algebra and the cyclotomic Hecke algebra respectively and proved that they satisfy the defining relations of the cyclotomic Khovanov--Lauda algebra. While the Khovanov--Lauda generators are too complicated to determine the value of Markov traces on them, which makes it is very hard to study the Markov traces on $H_n(\boldsymbol{u})$.

We now explain our results in detail. Clearly we have the chains of subgroups
\begin{equation*}
 W_{m,1}\subset W_{m,2}\subset\cdots\subset W_{m,n}
\end{equation*}
 and the corresponding chain of subalgebras
  \begin{equation*}
  H_1(\boldsymbol{u})\subset H_2(\boldsymbol{u})\subset\cdots\subset H_n(\boldsymbol{u}),
\end{equation*}
 where $W_{m,i}$ (resp.~$H_i(\boldsymbol{u})$) is generated by $s_0, s_1, \ldots, s_{i-1}$ (resp.~$t$, $s_1$, $\ldots$, $s_{i-1}$) for $1\leq i\leq n$. (We also set $W_{m,0}=\{1\}$ and $H_0(\boldsymbol{u})=\mathbb{C}(\boldsymbol{u})$.)

\begin{definition}\label{Def:Markov-traces}
  Let $z$ be a new variable. A \textit{Markov trace} on the tower of algebras $\{H_n(\boldsymbol{u})\}_{n=1}^{\infty}$ is defined to a collection of $\mathbb{C}(\boldsymbol{u})$--linear maps
\begin{equation*}
  \mathrm{Mtr}_n: H_n(\boldsymbol{u})\rightarrow \mathbb{C}(\boldsymbol{u},z), n\geq 1
\end{equation*}
such that
\begin{itemize}
\item $\mathrm{Mtr}_n(\alpha\beta)=\mathrm{Mtr}(\beta\alpha)$ for all $\alpha,\beta\in H_n(\boldsymbol{u})$ and all $n\geq 1$;
\item $\mathrm{Mtr}_{n+1}(\beta)=\mathrm{Mtr}_n(\beta)$  for all $\beta\in H_{n}(\boldsymbol{u})\subset H_{n+1}(\boldsymbol{u}$ and all $n\geq 1$;
\item $\mathrm{Mtr}_{n+1}(\beta s_n)=z\cdot\mathrm{Mtr}_n(\beta)$ for all $\beta\in H_n(\boldsymbol{u})$ and all $n\geq 1$.
\end{itemize}
\end{definition}

 Now we can state the first main result of this paper:
\begin{theorem}\label{Them:Markov}
  Given $z, y_1, \ldots, y_{m-1}\in \mathbb{C}(\boldsymbol{u})$, there is  a  unique $\mathbb{C}(\boldsymbol{u})$--linear function
   \begin{equation*}
     \mathrm{tr}: H_{\infty}(\boldsymbol{u})=\bigcup_{n=1}^{\infty}H_n(\boldsymbol{u})\rightarrow\mathbb{C}(\boldsymbol{u})
\end{equation*}
satisfying
\begin{enumerate}
\item[(m1)] $ \mathrm{tr}(1)=1$ (normalized condition);
\item[(m2)] $\mathrm{tr}(\alpha\beta)=\mathrm{tr}(\beta\alpha)$ for all $\alpha,\beta\in H_{\infty}(\boldsymbol{u})$;
\item[(m3)] $\mathrm{tr}(\alpha s_n)=z\,\mathrm{tr}(\alpha)$  for all $\alpha\in H_{n}(\boldsymbol{u})$ and for all $n\geq 1$;
\item[(m4)] $\mathrm{tr}(\alpha t_{n+1}^k))=y_{k}\mathrm{tr}(\alpha)$ for all $\alpha\in H_n(\boldsymbol{u})$,  $1\leq k\leq m-1$, and for all $n\geq 1$, where \begin{align*}t_{n+1}=s_{n}\cdots s_1ts_1\cdots s_{n} \text{ with }t_1=t.\end{align*}
\end{enumerate}
\end{theorem}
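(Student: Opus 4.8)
The plan is to prove existence and uniqueness by induction on $n$, using the relative basis of $H_n(\bu)$ over $H_{n-1}(\bu)$ together with the two identities $t_n=s_{n-1}t_{n-1}s_{n-1}$ and hence $t_n^a s_{n-1}=s_{n-1}t_{n-1}^a$ (both purely formal, using only $s_{n-1}^2=1$), which let one trade a ``high'' power $t_n^a$ for a single copy of $s_{n-1}$ sitting over an element of $H_{n-1}(\bu)$. The key structural input is that $H_n(\bu)$ is free as a left $H_{n-1}(\bu)$-module on the coset representatives $b_{a,j}=t_n^a s_{n-1}s_{n-2}\cdots s_j$ with $0\le a\le m-1$ and $1\le j\le n$ (where $b_{a,n}=t_n^a$); this has the correct rank $mn=|W_{m,n}|/|W_{m,n-1}|$ and specialises to the usual coset basis of $\mathfrak{S}_{n-1}$ in $\mathfrak{S}_n$ when $m=1$. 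I would record it as a preliminary lemma, it being the degenerate analogue of the Ariki--Koike basis theorem.

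For uniqueness I would first note the spanning decomposition
\[
H_n(\bu)=H_{n-1}(\bu)+H_{n-1}(\bu)\,s_{n-1}\,H_{n-1}(\bu)+\sum_{k=1}^{m-1}H_{n-1}(\bu)\,t_n^k\,H_{n-1}(\bu),
\]
which holds because each $b_{a,j}$ lies in one of the three summands: $b_{0,n}=1$, $b_{a,n}=t_n^a$, and for $j<n$ one has $b_{a,j}=s_{n-1}\bigl(t_{n-1}^a s_{n-2}\cdots s_j\bigr)\in s_{n-1}H_{n-1}(\bu)$. Arguing by induction on $n$, with base case $H_0(\bu)=\mathbb{K}$ where $\mathrm{tr}$ is forced by (m1) and linearity, any $\mathrm{tr}$ obeying (m1)--(m4) is then determined: for $\alpha,\beta\in H_{n-1}(\bu)$ one computes $\mathrm{tr}(\alpha s_{n-1}\beta)\overset{(m2)}{=}\mathrm{tr}(\beta\alpha\, s_{n-1})\overset{(m3)}{=}z\,\mathrm{tr}(\beta\alpha)$ and $\mathrm{tr}(\alpha t_n^k\beta)\overset{(m2)}{=}\mathrm{tr}(\beta\alpha\, t_n^k)\overset{(m4)}{=}y_k\,\mathrm{tr}(\beta\alpha)$, with $\beta\alpha\in H_{n-1}(\bu)$ handled by the inductive hypothesis, while the $H_{n-1}(\bu)$-part is immediate.

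For existence I would build $\mathrm{tr}$ recursively. Set $\mathrm{tr}(1)=1$, $\mathrm{tr}(t^k)=y_k$ on $H_1(\bu)$, and assuming $\mathrm{tr}_{n-1}$ is constructed, define $\mathrm{tr}_n$ on $H_n(\bu)$ through the free-module expansion $\alpha=\sum_{a,j}h_{a,j}b_{a,j}$ by the rules dictated by the uniqueness reductions: $\mathrm{tr}_n(h\cdot 1)=\mathrm{tr}_{n-1}(h)$, $\mathrm{tr}_n(h\,t_n^a)=y_a\,\mathrm{tr}_{n-1}(h)$ for $a\ge1$, and $\mathrm{tr}_n(h\,b_{a,j})=z\,\mathrm{tr}_{n-1}\bigl(t_{n-1}^a s_{n-2}\cdots s_j\,h\bigr)$ for $j<n$, extended linearly. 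This is well defined precisely because the $b_{a,j}$ form a basis. One then checks (m1), verifies that $\mathrm{tr}_n$ restricts to $\mathrm{tr}_{n-1}$ on $H_{n-1}(\bu)$ so that (m3), (m4) for $i<n-1$ follow from the inductive hypothesis, and observes that the cases $i=n-1$ are exactly the defining clauses.

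The crux, and the step I expect to be the main obstacle, is verifying the trace property (m2) for $\mathrm{tr}_n$. Here I would use the reduction that it suffices to prove $\mathrm{tr}_n(\alpha g)=\mathrm{tr}_n(g\alpha)$ for every algebra generator $g\in\{t,s_1,\ldots,s_{n-1}\}$ and every $\alpha$, since the general identity then follows by writing $\beta$ as a product of generators and cyclically shifting one generator at a time. By linearity I would take $\alpha=h\,b_{a,j}$ a basis element and run a case analysis on $g$ and on the shape of $b_{a,j}$. The cases where $g$ can be commuted past $b_{a,j}$ into $H_{n-1}(\bu)$ reduce directly to the inductive hypothesis that $\mathrm{tr}_{n-1}$ is a trace; the genuinely non-trivial cases --- $g=s_{n-1}$ meeting a basis element that already contains $s_{n-1}$, and $g=t$ meeting $t_n$ --- require the braid relation $s_{n-1}s_{n-2}s_{n-1}=s_{n-2}s_{n-1}s_{n-2}$, the commutativity of $t$ with $s_2,\ldots,s_{n-1}$, the relation $t(s_1ts_1+s_1)=(s_1ts_1+s_1)t$, and the identity $t_n^a s_{n-1}=s_{n-1}t_{n-1}^a$ to rewrite both $\alpha g$ and $g\alpha$ in the basis and match their reductions. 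Establishing this compatibility --- in effect, that the recursive rules are consistent with the full set of defining relations of $H_n(\bu)$ --- is the technical heart of the argument, and everything else is bookkeeping.
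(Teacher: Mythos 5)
Your proposal is correct and follows essentially the same route as the paper: your left $H_{n-1}(\bu)$-module basis $b_{a,j}$ coincides (after the rewriting $s_{n-1}\cdots s_j t_j^a = t_n^a s_{n-1}\cdots s_j$) with the paper's inductive basis of Theorems~\ref{Them:Inductive-basis} and \ref{Them:Basis-for-Markov}, your recursive definition of $\mathrm{tr}$ matches the paper's Equ.~(\ref{Equ:tr-def}) built on the decomposition $H_{n+1}(\bu)=H_n(\bu)s_nH_n(\bu)\oplus\bigoplus_{k}H_n(\bu)t_{n+1}^k$ of Lemma~\ref{Lemm:Bimodules-iso}, and your uniqueness argument and your reduction of (m2) to generators with a case analysis on basis types is exactly how the paper proceeds in Theorem~\ref{Them:alpha-beta=beta-alpha}. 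The case-by-case verification you defer as ``the technical heart'' is precisely what the paper carries out in Lemmas~\ref{Lemm:xs_nys_n}--\ref{Lemm:Type-IV}, using the same identities you list (the braid relations, $t_n^a s_{n-1}=s_{n-1}t_{n-1}^a$, and the consequences of $t(s_1ts_1+s_1)=(s_1ts_1+s_1)t$ recorded in Lemma~\ref{Lemm:s-t} and Corollary~\ref{Cor:t-n-n+1}).
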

Notably, since all generators $s_i$ and all $t_i^k$ ($k=1, \ldots,m-1$) for $i=1,2,\ldots$, are respectively conjugate in $H_{\infty}(\boldsymbol{u})$, any trace function must assign identical values to these elements. This explains the parameters $z$ and $y_k$'s independence from $n$ in Theorem~\ref{Them:Markov}~(m3, m4). We will refer to the uniquely defined $\mathbb{C}(\boldsymbol{u})$--linear function in Theorem~\ref{Them:Markov} as a \emph{normalized Markov trace} of $H_{\infty}(\boldsymbol{u})$ with parameters $z, y_1, \ldots, y_{m-1}$. Let us remark that the specialization of the normalized Markov trace enables us to define the Brou\'{e}--Malle--Michel symmetrizing trace on $H_n(\boldsymbol{u})$ (see Corollary~\ref{Cor:t-trace}), which is similar to the canonical symmetrizing trace on the cyclotomic Hecke
algebras introduced in \cite[\S2B]{BM}.

Recall that the \emph{Jucys--Murphy elements} of $H_n(\boldsymbol{u})$ are defined inductively as
\begin{eqnarray}\label{Equ:JM-H}
 &&J_1=t\text{ and }J_{i+1}\!:=s_iJ_{i}s_i+s_i, \quad i=1, \cdots, n-1.
  \end{eqnarray}
\cite[Theorem~7.5.6]{K} shows that the set
\begin{equation}\label{Equ:Standard-basis}
 \mathfrak{B}_n=\left\{J_1^{a_1}J_2^{a_2}\cdots J_{n}^{a_{n}}w\,|\,0\leq a_1, \ldots, a_n\leq m-1, w\in \mathfrak{S}_n\right\}
\end{equation}
forms a $\mathbb{C}(\boldsymbol{u})$-basis of $H_n(\boldsymbol{u})$, which is referred to as the \emph{standard basis} of $H_n(\boldsymbol{u})$.

In \cite[Appendix]{BK-Schur-Weyl}, Brundan and Kleshchev defined the \emph{Brundan--Kleshchev trace} on  $H_n(\boldsymbol{u})$:
\begin{equation*}
\tau_{\mathrm{BK}}(J_1^{a_1}\cdots J_n^{a_n} w)
        :=\left\{\vspace{1\jot}\begin{array}{ll} 1,&\text{ if }a_1=\cdots=a_n=m-1 \text{ and } w=1;\\
                0,&\text{ otherwise},
                \end{array}\right.
\end{equation*} where the suffix BK stands for Brundan--Kleshchev. Obviously, the normalized condition $\mathrm{tr}(1)=1$ shows that $\tau_{\mathrm{BK}}$ is not a specialization of the normalized Markov trace on $H_n(\boldsymbol{u})$. Furthermore, $\tau_{\mathrm{BK}}$ is not a specialization of any Markov trace  on $H_n(\boldsymbol{u})$ determined by Theorem~\ref{Them:Markov} (m2)--(m4) without normalized condition. This motivates us to introduce and study the non-normalized Markov traces on $H_n(\boldsymbol{u})$.

 \begin{theorem}\label{Them:Markov-BK}Given $z, y_1, \ldots, y_{m-1}\in \mathbb{C}(\boldsymbol{u})$, there is a unique $\mathbb{C}(\boldsymbol{u})$--linear function
   \begin{equation*}
     \mathrm{Tr}: H_{\infty}(\boldsymbol{u})=\bigcup_{n=1}^{\infty}H_n(\boldsymbol{u})\rightarrow \mathbb{C}(\boldsymbol{u})
\end{equation*}
satisfying
\begin{enumerate}
\item[(M1)] $ \mathrm{Tr}(1)=0$ (non-normalized condition);
\item[(M2)] $\mathrm{Tr}(xy)=\mathrm{Tr}(yx)$ for all $x,y\in H_{\infty}(\boldsymbol{u})$;
\item[(M3)] $\mathrm{Tr}(xs_n)=z\cdot\mathrm{Tr}(x)$ and $\mathrm{Tr}(xJ_{n+1}^k)=\mathrm{Tr}(J_{n+1}^k)\mathrm{Tr}(x)$  for all $x\in H_{n}(\boldsymbol{u})$, for all $k=1,\ldots,m-1$, and for all $n\geq 1$;
\item[(M4)] $\mathrm{Tr}(xs_nJ_{n}^ks_n)=\mathrm{Tr}(J_{n}^k)\mathrm{Tr}(x)$ for all $x\in H_n(\boldsymbol{u})$,  $1\leq k\leq m-1$, and for all $n\geq 1$;
  \item[(M5)] $\mathrm{Tr}(J_1^k)=y_k$ for $k=1, \ldots, m-1$.
\end{enumerate}
  \end{theorem}
We will refer to the uniquely defined $\mathbb{C}(\boldsymbol{u})$--linear functions in Theorem~\ref{Them:Markov-BK} as the \emph{non-normalized Markov trace} of $H_{\infty}(\boldsymbol{u})$ with parameters $z, y_1, \ldots, y_{m-1}$. As an application, we show  that the Brundan--Kleshchev trace is a specialization of the non-normalized Markov trace on $H_n(\boldsymbol{u})$ (see Corollary~\ref{Cor:BK-specialization}).

We give two remarks related to the paper.

\begin{remark}Note that the set of all mixed braids on $n$ standard strings forms the Artin Braid group of type $B_n$, which we  denoted by $B_{1,n}$:
\begin{equation*}
  B_{1,n}=\left\langle \tau,\sigma_1,\ldots, \sigma_{n-1}\left| \begin{array}{l}
    \sigma_{i}\sigma_{i+1}\sigma_{i}=\sigma_{i+1}\sigma_{i}\sigma_{i+1}\text{ for all }i\\
    \sigma_i\sigma_j=\sigma_j\sigma_i \text{ for }|i-j|\geq2\\
    \tau\sigma_i=\sigma_i\tau \text{ for } 2\leq i\leq n-1\\
    \tau\sigma_1\tau\sigma_1=\sigma_1\tau\sigma_1\tau
  \end{array} \right\rangle.\right.
\end{equation*}
$B_{1,n}$ may be seen as the subgroup of the classical braid group $B_{n+1}$  on $n+1$
strands, the elements of which keep the first strand fixed, which is isomorphic to the group of  braids with $n$ string in a thickened cylinder.
Now let $q$ be a new variable. The affine Hecke algebra $H^{\mathrm{aff}}_n(q)$ is an associative algebra over $\mathbb{C}(\boldsymbol{u},q)$, the fractional field of $\mathbb{C}[\boldsymbol{u},q]$, which is generated by $T_1,\ldots, T_{n-1}, X$ subjected to relations
\begin{eqnarray*}&&(T_i-q)(T_i+1)=0 \text{ for }1\leq i\leq n-1,\\
  &&T_iT_{i+1}T_i=T_{i+1}T_{i}T_{i+1} \text{ for }1\leq i\leq n-2,\\
  && T_iT_j=T_jT_i \text{ if } |i-j|\geq 2,\\
  &&XT_1XT_1=T_1XT_1X,\\
  && XT_i=T_iX \text{ for }2\leq i\leq n-1.\end{eqnarray*}
Clearly, there is a natural surjective homomorphism $\pi: B_{1,n}\twoheadrightarrow H^{\mathrm{aff}}_n(q)$. Now applying the Yangian limit to the affine Hecke algebra $H^{\mathrm{aff}}_n(q)$ (see e.g. \cite[Remark~5.1]{Isaev-Kirillov}), we obtain the degenerate affine Hecke algebra $H^{\mathrm{deg}}_n$ over $\mathbb{C}(\boldsymbol{u})$, which is generated by $s_1,\ldots, s_{n-1},x$ subjected to the relations:
\begin{eqnarray*}
&&x(s_1xs_1+s_1)=(s_1xs_1+s_1)x,\\
&&xs_i=s_ix\text{ for }2\leq i\leq n-1,\\
&&s_i^2=1 \text{ for } 1\le i\leq n-1,\\
&&s_is_{i+1}s_i=s_{i+1}s_is_{i+1} \text{ for  }1\le i\leq n-2,\\
&&s_is_j=s_js_i \text{ for } |i-j|\geq 2.
\end{eqnarray*}
Thus $H_n(\boldsymbol{u})$ is a quotient of $H^{\mathrm{deg}}_n$, which implies that there is a natural surjective homomorphism
 \begin{equation*}
  B^{\mathrm{aff}}_n\stackrel{\pi}\twoheadrightarrow H^{\mathrm{aff}}_n(q)\xymatrix@C=2.0cm{
  \ar@{->}[r]^{\text{Young limit}}&}H^{\mathrm{deg}}_n\twoheadrightarrow H_n(\boldsymbol{u}).
 \end{equation*}
Thanks to the Brundan--Kleshchev isomorphism,  it is natura to expect that the Markov traces on $H_n(\boldsymbol{u})$ may help us to understand the `mixed' knots/links in the solid torus (cf.~\cite[\S5]{L}),  through the relations $s_i^2=1$ kill the braiding information.
\end{remark}

\begin{remark}Denote by  $\mathscr{P}_{m,n}$ the set of all $m$-tuples of partitions $\boldsymbol{\lambda}=(\lambda^{(1)};\ldots; \lambda^{(m)})$ such that $|\boldsymbol{\lambda}|=|\lambda^{(1)}|+\cdots+|\lambda^{(m)}|=n$, that is, $m$-multipartitions of $n$. Then, thanks to \cite[Theorem~6.11]{AMR}, $H_n(\boldsymbol{u})$ is split semisimple and $\mathscr{P}_{m,n}$ parameterizes the simple $H_n(\boldsymbol{u})$-modules.  We write  \begin{equation*}
\mathrm{Irr\,}(H_n(\boldsymbol{u}))=\{\chi_{\boldsymbol{\lambda}}|\boldsymbol{\lambda}\in\mathscr{P}_{m,n}\}
\end{equation*}
for the set of irreducible characters of $H_{n}(\boldsymbol{u})$.  Since $\mathrm{Irr}(H_n(\boldsymbol{u}))$ is a basis of the vector space of trace functions on $H_n(\boldsymbol{u})$, for any trace $\tau: H_n(\boldsymbol{u})\rightarrow \mathbb{C}(\boldsymbol{u})$, there are unique elements $\omega_{\boldsymbol{\alpha}}\in \mathbb{C}(\boldsymbol{u})$ satisfying
\begin{equation*}
  \tau=\sum_{\boldsymbol{\alpha}\in \mathscr{P}_{m,n}}\omega_{\boldsymbol{\alpha}}\chi_{\boldsymbol{\alpha}},
\end{equation*}
which are called the \emph{weights} of $\tau$. It would be interesting to find the explicit formulas for the weights of (non-)normalized Markov traces, which would enable us to give an alternative proof of the combinatorial formulas for the Schur elements of $H_n(\boldsymbol{u})$ (see \cite[Theorem~5.5]{Z2} or \cite[Theorems~3.4 and 4.2]{Z1}. It is natural to expect that the two approaches in \cite{Orellana-Ram,Rui} may be adapted to compute the weights, unfortunately I can not do this.
\end{remark}

This paper is organized as follows. Section~\ref{Sec:Inductive-basis} provides an inductive basis for the degenerate cyclotomic Hecke algebra via its standard basis. In Section~\ref{Sec:Markov-traces}, we construct the normalized Markov traces on the degenerate cyclotomic Hecke algebra via its inductive basis and prove Theorem~\ref{Them:Markov}. Section~\ref{Sec:BK-trace} aims to construct the non-normalized Markov traces on the degenerate cyclotomic Hecke algebra via its standard basis and prove Theorem~\ref{Them:Markov-BK}. The specializations of the (non-)normalized Markov traces are investigated in the last section.

\section{Inductive basis}\label{Sec:Inductive-basis}

In this section we present some facts which are used later and construct the inductive basis for $H_n(\boldsymbol{u})$ from its standard basis.

The \textit{Jucys--Murphy elements} of $\mathbb{C}(\boldsymbol{u})\mathfrak{S}_n$ are defined as the sum of transpositions:
\begin{eqnarray*}
  &&L_k=(1,k)+(2,k)+\cdots+(k\negmedspace-\negmedspace1,k),\quad k=2, \ldots, n,
\end{eqnarray*}
with $L_1=0$ (and recall that $s_i=(i,i+1)$ for $i=1,\ldots,n-1$). These elements,   studied by Jucys \cite{Jucys} and Murphy \cite{Murphy}, admit an inductive characterization:
\begin{eqnarray}\label{Equ:JM-S}
&&L_1=0 \text{ and } L_{k+1}=s_kL_{k}s_k+s_k, \quad k=1, 2,\ldots, n-1,
\end{eqnarray}
or equivalently,
\begin{eqnarray*}
L_{k+1}&=&\sum_{j=1}^{k}s_k\cdots s_j\cdots s_k, \quad k=1, 2,\ldots, n-1.
\end{eqnarray*}
They generate a maximal commutative subalgebra of $\mathbb{C}(\boldsymbol{u})\mathfrak{S}_n$  \cite[Eq.~(2.6)]{Murphy}.

For $1\leq i,j\leq n-1$, we write $s_{i,j}=s_i\cdots s_{j}$. Recall that $t_{i+1}=s_{i}t_{i}s_{i}$ with $t_1=t$ for $i\geq1$, that is, $t_{i+1}=s_{i,1}t_{1}s_{1,i}$. Eqs.~\eqref{Equ:JM-H} and \eqref{Equ:JM-S} show
\begin{eqnarray}\label{Equ:J-JM}
  &&J_k=t_k+L_k, \qquad k=1,2, \ldots, n.
\end{eqnarray}

The following fact is well-known, see for example \cite[2.3]{Z2}.
\begin{lemma}\label{Lem:si-Jj}Suppose that $1\le i<n$ and $1\le j, k\le n$. Then
\begin{enumerate}\setlength{\itemsep}{1\jot}
\item  $s_jJ_j-J_{j+1}s_j=-1$ and $s_{j-1}J_j-J_{j-1}s_{j-1}=1$.
 \item $s_iJ_j=J_js_i$ if $i\neq j-1, j$.
 \item $J_jJ_k=J_kJ_j$ if $1\le j, k\le n$.
 \item $s_j(J_jJ_{j+1})=(J_jJ_{j+1})s_j$ and $s_j(J_j+J_{j+1})=(J_{j}+J_{j+1})s_j$.
 \item if $a\in \mathbb{C}(\boldsymbol{u})$ and $i\neq j$ then $s_i$ commutes with $(J_1-a)(J_2-a)\cdots(J_j-a)$.
 \end{enumerate}
  \end{lemma}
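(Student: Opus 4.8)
The plan is to take part (i) as the computational engine and bootstrap the remaining assertions from it, using the recursion \eqref{Equ:JM-H} and the Coxeter relations of $H_n(\bu)$. For (i) I would simply unwind the definition: from $J_{j+1}=s_jJ_js_j+s_j$ and $s_j^2=1$ we get $J_{j+1}s_j=s_jJ_j+1$, which is the first identity; writing the recursion as $J_j=s_{j-1}J_{j-1}s_{j-1}+s_{j-1}$ and multiplying on the left by $s_{j-1}$ gives $s_{j-1}J_j=J_{j-1}s_{j-1}+1$, the second. Both are one-line manipulations.

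For (ii) I would split into the regimes $i\ge j+1$ and $i\le j-2$. When $i\ge j+1$, the element $J_j$ lies in the subalgebra $H_j(\bu)$ generated by $t,s_1,\dots,s_{j-1}$, and $s_i$ commutes with each of these generators (with $t$ because $i\ge 2$, and with each $s_\ell$, $\ell\le j-1$, because $|i-\ell|\ge 2$), hence with $J_j$. The regime $i\le j-2$ is cleanest through the decomposition $J_j=t_j+L_j$ of \eqref{Equ:J-JM}: the commutation $s_iL_j=L_js_i$ is the classical fact for the symmetric-group Jucys--Murphy elements \eqref{Equ:JM-S}, while $s_it_j=t_js_i$ follows by pushing $s_i$ through the word $t_j=s_{j-1}\cdots s_1\,t\,s_1\cdots s_{j-1}=s_{j-1}t_{j-1}s_{j-1}$ using the braid relations together with $ts_\ell=s_\ell t$ for $\ell\ge2$; the boundary value $i=j-2$ is the only delicate one and is handled by a single application of the braid relation $s_{j-2}s_{j-1}s_{j-2}=s_{j-1}s_{j-2}s_{j-1}$. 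Note that this step does not use the special relation $t(s_1ts_1+s_1)=(s_1ts_1+s_1)t$.

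The heart of the lemma is (iii), and I would first reduce the non-consecutive cases to the consecutive one. Arguing by induction on the larger index $b$, the case $a\le b-2$ follows from $J_b=s_{b-1}J_{b-1}s_{b-1}+s_{b-1}$, since $s_{b-1}$ commutes with $J_a$ by (ii) and $J_aJ_{b-1}=J_{b-1}J_a$ by induction. What remains, and what I expect to be the \emph{main obstacle}, is the consecutive commutativity $J_kJ_{k+1}=J_{k+1}J_k$. Its base case $k=1$ is exactly the defining relation $t(s_1ts_1+s_1)=(s_1ts_1+s_1)t$, i.e. $J_1J_2=J_2J_1$; the difficulty is that conjugation by a single $s_i$ only raises the gap (conjugating $J_{b-2}J_{b-1}=J_{b-1}J_{b-2}$ by $s_{b-1}$ yields the gap-two relation $J_{b-2}J_b=J_bJ_{b-2}$) and never transports the base relation to a higher consecutive pair. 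I would therefore either propagate it by a longer conjugation $s_{k-1}\cdots s_2s_1$, carefully tracking the correction terms produced by (i), or---more cleanly---invoke that $H_n(\bu)$ is the cyclotomic quotient of the degenerate affine Hecke algebra, in which the $J_k=x_k$ commute by construction; this is the content of \cite[Theorem~7.5.6]{K} and \cite[2.3]{Z2}.

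Finally, (iv) and (v) are formal consequences. For (iv), the identities $s_jJ_{j+1}=J_js_j+1$ and $s_jJ_j=J_{j+1}s_j-1$ from (i), combined with $J_jJ_{j+1}=J_{j+1}J_j$ from (iii), give both $s_j(J_j+J_{j+1})=(J_j+J_{j+1})s_j$ and $s_j(J_jJ_{j+1})=(J_jJ_{j+1})s_j$ after a short cancellation. For (v), expanding $(J_i-a)(J_{i+1}-a)$ and applying (iv) shows that $s_i$ commutes with this quadratic factor, while by (ii) it commutes with every other factor $(J_k-a)$, $k\ne i,i+1$; since all the $J_k$ commute by (iii), the factors may be reordered freely, so $s_i$ commutes with the whole product $(J_1-a)\cdots(J_j-a)$ precisely when $i\ne j$.
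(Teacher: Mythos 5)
Your proposal is correct, but it takes a genuinely different route from the paper, which offers no proof at all: Lemma~\ref{Lem:si-Jj} is simply declared well known, with a citation to \cite[2.3]{Z2}. Your derivations of (i), (ii), (iv), (v) from the recursion (\ref{Equ:JM-H}), the Coxeter relations, and the splitting $J_j=t_j+L_j$ of (\ref{Equ:J-JM}) are sound (including the boundary case $i=j-2$ in (ii) and the cancellation $s_jJ_jJ_{j+1}=(J_{j+1}s_j-1)J_{j+1}=J_{j+1}J_js_j$ in (iv)), and your diagnosis of (iii) is exactly right: the whole lemma reduces formally to the consecutive commutations $J_kJ_{k+1}=J_{k+1}J_k$, only $k=1$ of which is a defining relation, and one-step conjugation produces only gap-two relations, so genuinely new input is needed for $k\geq 2$. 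Settling that point by the identification of $H_n(\bu)$ with the cyclotomic quotient of the degenerate affine Hecke algebra \cite[Theorem~7.5.6]{K} is at precisely the level of rigor of the paper's own citation, so your proof is as complete as the paper's. What your route buys is transparency: it isolates the single non-formal ingredient, and it makes clear that one could not instead quote Lemma~\ref{Lemm:s-t}(iii) here, since the paper proves that identity \emph{from} the commutativity of the $J_k$; a fully self-contained argument would have to prove the commutator identity for the $t_k$'s directly from $ts_1ts_1-s_1ts_1t=s_1t-ts_1$ (in the style of the paper's proofs of Lemma~\ref{Lemm:s-t}(iv, v)), which is the computation your sketched ``longer conjugation'' option would have to supply. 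What the paper's approach buys is brevity. One small overstatement on your side: in (v) you claim commutation holds ``precisely when'' $i\neq j$, but the lemma asserts, and your argument establishes, only one implication.
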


The following fact will be useful.

\begin{lemma}\label{Lemm:s-t}For $1\leq a,b\leq n$ and  positive integers $k,\ell$, we have
  \begin{enumerate}
    \item[(i)] $s_at_b=t_bs_a$ if $a\neq b, b-1$.
    \item[(ii)] $s_at_a=t_{a+1}s_a$.
    \item[(iii)]$t_at_b-t_bt_a=[L_a,t_b]-[t_a,L_b]$, equivalently
    \begin{align*}
t_{a}t_b-t_bt_a=s_{b-1,1}s_{a-1,2}(s_1t-ts_1)s_{2,a-1}s_{1,b-1}.\end{align*}
\item[(iv)] Recursion identity:
\begin{align*}t^{\ell}s_1t^ks_1=s_1t^ks_1t^{\ell}+\displaystyle\sum_{i=1}^{\ell}
    (t^{\ell-i}s_1t^{k+i-1}-t^{k+i-1}s_1t^{\ell-i}).\end{align*}

\item[(v)] Braiding relation:  \begin{align*}t_{n}^{\ell}t_{n+1}^k=(s_{n-1}s_n)(s_{n-2}s_{n-1})\cdots(s_1s_2)
(t^{\ell}s_1t^ks_1)(s_2s_1)\cdots(s_{n-1}s_{n-2})(s_ns_{n-1}).\end{align*}

\item[(vi)]Conjugation relation:
\begin{align*}t_{n+1}^kt=tt_{n+1}^k+t^ks_{n,1}s_{2,n}-s_{n,1}s_{2,n}t^k.\end{align*}
  \end{enumerate}
\end{lemma}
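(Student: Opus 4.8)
The plan is to reduce all six identities to a single seed relation together with the two elementary moves recorded in (i) and (ii). The seed is the consequence
\[
t_1t_2-t_2t_1=s_1t-ts_1
\]
of the defining relation $t(s_1ts_1+s_1)=(s_1ts_1+s_1)t$ (here $t_1=t$, $t_2=s_1ts_1$, and one expands both sides and cancels). I would settle (ii) first: since $t_{a+1}=s_as_{a-1}\cdots s_1ts_1\cdots s_{a-1}s_a=s_at_as_a$, right-multiplying by $s_a$ and using $s_a^2=1$ gives $t_{a+1}s_a=s_at_a$. For (i) I would induct on $b$ via $t_b=s_{b-1}t_{b-1}s_{b-1}$: when $a\ge b+1$ or $a\le b-3$ the generator $s_a$ commutes with $s_{b-1}$ and, by induction, with $t_{b-1}$, hence with $t_b$. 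The only delicate case is $a=b-2$, where one writes $t_b=s_{b-1}s_{b-2}t_{b-2}s_{b-2}s_{b-1}$ and pushes $s_{b-2}$ through using the braid relation $s_{b-2}s_{b-1}s_{b-2}=s_{b-1}s_{b-2}s_{b-1}$ (applied twice) and the inductive commutation of $s_{b-1}$ with $t_{b-2}$.

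For (iii), the first identity is purely formal: from $J_k=t_k+L_k$, the commutativity of the Jucys--Murphy elements (Lemma~\ref{Lem:si-Jj}(iii)), and $[L_a,L_b]=0$, expanding $0=J_aJ_b-J_bJ_a$ and cancelling the $L$--$L$ term isolates $t_at_b-t_bt_a$ as the combination of the cross-commutators of the $t$'s with the $L$'s. The explicit second form I would obtain by conjugating the seed: writing $t_a$ and $t_b$ as the stated conjugates of $t$ by the descending products $s_{a-1}\cdots s_1$ and $s_{b-1}\cdots s_1$ and using (i)--(ii) to slide these products past one another carries $s_1t-ts_1$ to $s_{b-1}\cdots s_1s_{a-1}\cdots s_2(s_1t-ts_1)s_2\cdots s_{a-1}s_1\cdots s_{b-1}$.

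Identity (iv) is the technical heart and lives entirely in the rank-two subalgebra generated by $t$ and $s_1$; I would prove it by induction on $\ell$, the base case $\ell=1$ amounting to $t\cdot s_1t^ks_1-s_1t^ks_1\cdot t=s_1t^k-t^ks_1$, itself obtained from the seed by a short induction on $k$, while the inductive step multiplies the identity for $\ell-1$ on the left by $t$ and reorganizes the words $t^ps_1t^q$ via the seed to split off the extra summand. Granting (iv), the last two identities follow quickly. For (v) the key point, from (i) and (ii), is that conjugation by $s_a$ interchanges $t_a\leftrightarrow t_{a+1}$ and fixes every other $t_b$ \emph{with no correction term}, so conjugation by any word in the $s_i$ permutes the $t$'s exactly as the associated permutation of indices; since conjugation by $\Sigma^{-1}=(s_2s_1)\cdots(s_ns_{n-1})$ sends $t_n\mapsto t_1$ and $t_{n+1}\mapsto t_2$, one gets $\Sigma^{-1}t_n^\ell t_{n+1}^k\Sigma=t_1^\ell t_2^k=t^\ell s_1t^ks_1$, which rearranges to the displayed formula. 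For (vi) I would write $t_{n+1}^k=s_n\cdots s_1\,t^k\,s_1\cdots s_n$, use that $t$ commutes with $s_2,\dots,s_n$ to factor $s_n\cdots s_2$ out of both $t_{n+1}^kt$ and $tt_{n+1}^k$, and apply the $\ell=1$ case of (iv) to the bracketed rank-two expression; the surviving term $(s_n\cdots s_2)(t^ks_1-s_1t^k)(s_2\cdots s_n)=t^kw-wt^k$ with $w=s_n\cdots s_2s_1s_2\cdots s_n$ is exactly (vi).

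I expect the main obstacle to be the bookkeeping in (iv): carrying the telescoping sum $\sum_{i=1}^{\ell}(t^{\ell-i}s_1t^{k+i-1}-t^{k+i-1}s_1t^{\ell-i})$ through the induction, since each use of the seed produces two new mixed words that must be matched against the correct summand. A secondary nuisance is making the conjugation in the explicit part of (iii) and in (v) fully rigorous, namely verifying that the descending products induce precisely the claimed permutation of $t$-indices; this is routine but must be handled with care about composition order.
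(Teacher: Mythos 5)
Your proposal is correct, and its skeleton --- reduce everything to the rank-two seed relation $t_1t_2-t_2t_1=s_1t-ts_1$ extracted from $t(s_1ts_1+s_1)=(s_1ts_1+s_1)t$, then propagate by conjugation --- is the same strategy the paper follows, so the comparison is mainly about how (iv)--(vi) are organized. For (iv) you induct on $\ell$ (with the base case $ts_1t^ks_1=s_1t^ks_1t+s_1t^k-t^ks_1$ supplied by an auxiliary induction on $k$), whereas the paper fixes $\ell$ and inducts on the other exponent; these are mirror images, and your telescoping does close up: multiplying the $(\ell-1)$-identity by $t$ on the left, the correction $s_1t^{k+\ell-1}$ becomes exactly the $i=\ell$ summand and the reindexed negative sum absorbs the term $t^ks_1t^{\ell-1}$. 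For (v) the paper runs an explicit induction on $n$, while you instead exploit that (i)--(ii) give \emph{exact} conjugation formulas $s_at_as_a=t_{a+1}$ and $s_at_bs_a=t_b$ for $b\neq a,a+1$, with no correction term (precisely the feature that fails for the $J_i$, where $s_aJ_as_a=J_{a+1}-s_a$); hence conjugation by $(s_{n-1}s_n)\cdots(s_1s_2)$ permutes the indices of the $t$'s and carries $t^{\ell}s_1t^ks_1=t_1^{\ell}t_2^k$ to $t_n^{\ell}t_{n+1}^k$ in one stroke --- the same computation as the paper's, packaged without induction. For (vi) your route is genuinely more economical: the paper re-telescopes the seed relation $k$ times, while you factor $t_{n+1}^kt=s_n\cdots s_2\,(s_1t^ks_1t)\,s_2\cdots s_n$ using $ts_i=s_it$ for $i\geq 2$ and then apply the $\ell=1$ case of (iv) \emph{once}, the two leftover words assembling into $t^kw-wt^k$ with $w=s_n\cdots s_2s_1s_2\cdots s_n$, which is exactly (vi). A small bonus of your versions is that they force the correct exponents and signs at points where the paper's printed formulas appear to contain typos: the displayed base case for (iv) has exponent $\ell-1$ where $\ell$ is needed (at $\ell=1$ it would contradict the seed), and the second display in (iii) is off by a sign when tested at $a=1$, $b=2$; your derivations of both by direct conjugation of the seed would expose and repair these. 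The one place to be scrupulous, as you note yourself, is the composition order of the conjugating products in (iii) and (v), but your identification of the induced permutation of $t$-indices checks out.
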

\begin{proof} (i)--(ii) follow from direct computation. (iii) follows by using the commutativity of $L_a$ and $J_b$ with Eq.~\eqref{Equ:J-JM}.

We prove (iv) by induction on $\ell$. For $\ell=1$, by apply the equality \begin{align*}s_1ts_1ts_1t+s_1t=ts_1ts_1+ts_1\end{align*} iteratively, we obtain
\begin{align*}
ts_1t^{k}s_1&=s_1t^{k}s_1t+s_1t^{k}-t^{k}s_1
\end{align*}
for any positive integer $k$. Assume it is true for $\ell\geq 1$. For $\ell+1$, by induction hypothesis, we  have
\begin{align*}
t^{\ell+1}s_1t^{k}s_1&=ts_1t^{k}s_1t^{\ell}+\sum_{i=1}^{\ell}\left(t^{\ell+1-i}
s_1t^{k+i-1}-t^{k+i}s_1t^{\ell-i}\right)\\
&=s_1t^{k}s_1t^{\ell+1}+\sum_{i=1}^{\ell+1}\left(t^{\ell+1-i}
s_1t^{k+i-1}-t^{k+i-1}s_1t^{\ell+1-i}\right).
 \end{align*}
Thus (iv) holds for any positive integers $k,\ell$.

 (v). Apply induction on $n$ and $n=1$ is trivial. Assume it holds for $n\geq 1$. For $n+1$, \begin{align*}
 t_{n+1}^{\ell}t_{n+2}^{k}&=(s_ns_{n+1})t_{n}^{\ell}t_{n+1}^{k}(s_{n+1}s_{n})\\
  &=(s_ns_{n+1})(s_{n-1}s_n)\cdots(s_1s_2)(t^{\ell}s_1t^{k}s_1)
    (s_2s_1)\cdots(s_ns_{n-1})(s_{n+1}s_{n}).
 \end{align*}
 (vi). It is easy to see that
 \begin{align*}
   t_{n+1}^kt&=s_{n,2}(s_1t^ks_1t)s_{2,n}\\
   &=s_{n,2}(ts_1t^ks_1+t^ks_1-s_1t^k)s_{2,n}\\
   &=tt_{n+1}^k+t^ks_{n,1}s_{2,n}-s_{n,1}s_{2,n}t^k,
 \end{align*}
where the second equality follows by applying (iv).
\end{proof}

The following fact will be useful.
\begin{corollary}\label{Cor:t-n-n+1}For positive integers $n,k,\ell,a$, we have
\begin{align*}
t_{n}^{\ell}t_{n+a}^k=t_{n+a}^kt_{n}^{\ell}+\sum_{i=1}^{\ell}
s_{n+a-1}\cdots s_{n}
\left(t_n^{\ell-i}s_nt_n^{k+i-1}-t_n^{k+i-1}s_nt_n^{\ell-i}\right)
s_{n}\cdots s_{n+a-1}.
\end{align*}
\end{corollary}
\begin{proof}We argue by induction. For $a=1$, Lemma~\ref{Lemm:s-t}(v) gives
  \begin{align*}
 t_n^{\ell}t_{n+1}^k=(s_{n-1}s_n)\cdots(s_1s_2)(s_1t^ks_1t^{\ell})(s_2s_1)\cdots(s_ns_{n-1}).
  \end{align*}
Then the equality follows by using Lemma~\ref{Lemm:s-t}(iv, v).

Now assume it holds for $a\geq 1$.  Since
 \begin{align*}
  t_{n+a+1}^k=(s_{n+a}\cdots s_{n+1})t_{n+1}^k(s_{n+1}\cdots s_{n+a}),
 \end{align*}
  Lemma~\ref{Lemm:s-t}(i) shows
\begin{align*}
t_{n}^{\ell}t_{n+a+1}^k
&= (s_{n+a}\cdots s_{n+1})(t_{n}^{\ell}t_{n+1}^{k})(s_{n+1}\cdots s_{n+a}) \\
&= t_{n+a}^k t_{n}^{\ell} + \sum_{i=1}^{\ell} s_{n+a}\cdots s_{n+1}(t_n^{\ell-i}s_n t_n^{k+i-1} - t_n^{k+i-1}s_n t_n^{\ell-i})s_{n+1}\cdots s_{n+a}.
\end{align*}
This completes the proof.
\end{proof}

Now we are ready to describe the inductive basis for \(H_{\infty}(\boldsymbol{u})\), which are derived from the standard basis Eq.~(\ref{Equ:Standard-basis}). The following fact is an \(H_{n+1}(\boldsymbol{u})\)-analogue of \cite[(4.4)]{Jones} and \cite[Theorem~2]{L}.
\begin{theorem}\label{Them:Inductive-basis}
Every element of \(H_{n+1}(\boldsymbol{u})\) can be uniquely expressed as a $\mathbb{C}(\boldsymbol{u})$--linear combination of the following four types of words:
\begin{enumerate}
  \item[(I)] $h_{n}$;
  \item[(II)] $h_{n}s_n \cdots s_i$, $i = 1, \ldots, n$;
  \item[(III)] $h_{n}s_n \cdots s_i J_{i}^k$, $i = 1, \ldots, n$, $1 \leq k\leq m-1$;
  \item[(IV)] $h_{n}J_{n+1}^k$, $1 \leq k \leq m-1$;
\end{enumerate}
where \(h_{n} \in H_n(\boldsymbol{u})\).
\end{theorem}
\begin{proof}We prove this by induction on $n$.
Thanks to the standard basis theorem \cite[Theorem~7.5.6]{K} (see Eq.~\eqref{Equ:Standard-basis}), it is enough to show that \begin{align*}
x &= J_1^{k_1} \cdots J_{n+1}^{k_{n+1}} w \in \mathfrak{B}_{n+1}\setminus \mathfrak{B}_n
\end{align*}
can be expressed uniquely in terms of the four-type words, that is, we only need to consider the following cases:

\textbf{Case (a):} $x$ contains no $J_{n+1}$, but $w = w'(s_n s_{n-1} \cdots s_i)$ with $w'\in \mathfrak{S}_{n}$, i.e.,  \begin{align*}
  x = J_{i_1}^{k_1}J_{i_2}^{k_2} \cdots J_{i_r}^{k_r}w'(s_n s_{n-1} \cdots s_i),\end{align*} where $1 \leq i_1 < \cdots < i_r \leq n$ and $w'\in \mathfrak{S}_{n}$. Then
\begin{align*}x = (J_{i_1}^{k_1} \cdots J_{i_r}^{k_r}w')(s_n s_{n-1} \cdots s_i),\end{align*}
which is of type (II).

\textbf{Case (b):}  $x$ contains $J_{n+1}$ and \(x = J_{i_1}^{k_1}J_{i_2}^{k_2} \cdots J_{i_r}^{k_r}J_{n+1}^{k}w\) with \(1 \leq i_1 < \cdots < i_r \leq n\), $k\geq 1$ and \(w \in  \mathfrak{S}_{n}\). Using the commutation relation $J_{n+1}w=wJ_{n+1}$ (Lemma~\ref{Lem:si-Jj}(ii)), we rewrite
 \begin{align*}x = \left(J_{i_1}^{k_1} \cdots J_{i_r}^{k_r}w\right) J_{n+1}^{k},\end{align*}
  which is of type (IV).

\textbf{Case (c):} \(x = x'J_{n+1}^k w'(s_n s_{n-1} \cdots s_i)\) with \(x' \in H_n(\boldsymbol{u})\) and \(w' \in \mathfrak{S}_n\).  Then
\begin{align*}
x &= (x'w')(J_{n+1}^{k}s_n s_{n-1} \cdots s_i) \qquad \text{(Lemma~\ref{Lem:si-Jj}(ii))}\\
&= (x'w')J_{n+1}^{k-1}(s_n J_n + 1)s_{n-1} \cdots s_i\qquad  \text{(Eq~\eqref{Equ:JM-H})}\\
&= (x'w's_{n-1} \cdots s_i)J_{n+1}^{k-1} + (x'w')(J_{n+1}^{k-1}s_n)J_n s_{n-1} \cdots s_i.
\end{align*}
By induction, \(y = J_n s_{n-1} \cdots s_i\) is a $\mathbb{C}(\boldsymbol{u})$--linear combination of words of four types, that is, there exists \(h_{n-1} \in H_{n-1}(\boldsymbol{u})\) such that
\begin{align*}
 y&= h_{n-1}(1 + s_{n-1} \cdots s_i + s_{n-1} \cdots s_i J_i^a + J_n^b)
\end{align*}
for some $1\leq a,b<m-1$. Thus
\begin{align*}(J_{n+1}^{k-1} s_n)y &=  h_{n-1}(J_{n+1}^{k-1} s_n)(1 + s_{n-1} \cdots s_i + s_{n-1} \cdots s_i J_i^a + J_n^b)
\end{align*}
 is a $\mathbb{C}(\boldsymbol{u})$--linear combination of words of four types, showing \(x\) is also such a combination. The standard basis is unique, so any two expressions of $x$ in terms of the four types would imply a non-trivial linear relation in the standard basis, which is impossible. Thus the decomposition is unique.
\end{proof}

\begin{theorem}\label{Them:Basis-for-Markov}
Every element of \(H_{n+1}(\boldsymbol{u})\) can be written uniquely as a $\mathbb{C}(\boldsymbol{u})$--linear combination of words of one of the following types:
\begin{enumerate}
  \item[(a)] \(h_{n}\);
  \item[(b)] \(h_{n}s_n \cdots s_i\), $i = 1, \ldots, n$;
  \item[(c)] \(h_{n}s_n \cdots s_i t_{i}^k\), $i = 1, \ldots, n$, \(1 \leq k\leq m-1\);
  \item[(d)] \(h_{n}t_{n+1}^k\), \(1 \leq k \leq m-1\);
\end{enumerate}
where \(h_{n} \in H_n(\boldsymbol{u})\).
\end{theorem}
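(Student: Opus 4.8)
The plan is to deduce this from Theorem~\ref{Them:Inductive-basis} by a change-of-basis argument rather than rerunning the induction from scratch. First observe that the type-(a) and type-(b) words are \emph{literally} the type-(I) and type-(II) words of Theorem~\ref{Them:Inductive-basis}; so the only thing to verify is that replacing $J_i$ by $t_i$ in type~(III) and $J_{n+1}$ by $t_{n+1}$ in type~(IV) again yields a free left $H_n(\bu)$-basis of $H_{n+1}(\bu)$. Writing $\mathcal{E}_J=\{1,\ s_n\cdots s_i,\ s_n\cdots s_iJ_i^k,\ J_{n+1}^k\}$ and $\mathcal{E}_t=\{1,\ s_n\cdots s_i,\ s_n\cdots s_it_i^k,\ t_{n+1}^k\}$ for the two families of ``tail words'', both are indexed by the same finite set, and Theorem~\ref{Them:Inductive-basis} says $\mathcal{E}_J$ is a left $H_n(\bu)$-basis. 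Hence it suffices to prove that the matrix expressing $\mathcal{E}_t$ in terms of $\mathcal{E}_J$, with coefficients in $H_n(\bu)$ acting on the left, is invertible.

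The mechanism for invertibility will be unitriangularity. By Equation~(\ref{Equ:J-JM}) we have $t_k=J_k-L_k$ with $L_k\in\mathbb{K}\mathfrak{S}_{k}\subseteq\mathbb{K}\mathfrak{S}_{n+1}$, so $t_k$ agrees with $J_k$ up to a term in the group algebra. I would organize this using the degree that assigns $\deg J_k=1$ and $\deg s_j=0$, i.e. the total Jucys--Murphy power of a standard monomial $J_1^{a_1}\cdots J_{n+1}^{a_{n+1}}w$: the relations $s_jJ_j=J_{j+1}s_j-1$ and the cyclotomic relations only preserve or lower it, and since every exponent occurring here is $<m$ the cyclotomic reduction is never triggered. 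With respect to this degree each $t$-word equals the corresponding $J$-word plus terms in which the Jucys--Murphy power of the ``new'' factor has strictly dropped; assigning to each element of $\mathcal{E}_J$ the \emph{weight} equal to that exponent $k$ (and $0$ for types (a),(b)), the correction terms, after re-sorting into $\mathcal{E}_J$, become left $H_n(\bu)$-multiples of tail words of strictly smaller weight. Distinct elements of $\mathcal{E}_t$ thus have distinct diagonal images in $\mathcal{E}_J$ with only strictly-lower-weight off-diagonal contributions, so the transition matrix is unitriangular with diagonal entries $1$, hence invertible, and $\mathcal{E}_t$ is a basis.

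The routine case is type~(c): since $L_i\in\mathbb{K}\mathfrak{S}_i\subseteq H_n(\bu)$ for $i\le n$, expanding $t_i^k=(J_i-L_i)^k$ and commuting the group-algebra factors leftward past $s_n\cdots s_{i+1}$ via Lemma~\ref{Lemm:s-t} produces $s_n\cdots s_iJ_i^k$ plus words of smaller $J_i$-exponent. The genuine obstacle is type~(d), because $L_{n+1}=s_nL_ns_n+s_n$ does \emph{not} lie in $H_n(\bu)$, so the naive substitution introduces factors of $s_n$ that must be reorganized. The clean route is the identity $t_{n+1}^k=s_nt_n^ks_n$ (from $t_{n+1}=s_nt_ns_n$ and $s_n^2=1$): since $t_n^k\in H_n(\bu)$, expanding it by $t_n=J_n-L_n$ and pushing the outer $s_n$'s inward through $s_nJ_n=J_{n+1}s_n-1$ and $s_nJ_{n+1}=J_ns_n+1$ exhibits $t_{n+1}^k$ as $J_{n+1}^k$ plus words of $J_{n+1}$-degree $<k$. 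The point requiring care, and the main obstacle, is checking that \emph{all} these corrections really land in strictly-lower-weight components of $\mathcal{E}_J$, so that no same-weight off-diagonal interference occurs; this is precisely what the $t$-commutation and straightening identities of Lemma~\ref{Lemm:s-t}(vi) and Corollary~\ref{Cor:t-n-n+1} are designed to control, together with the compatibility of the Jucys--Murphy filtration with the free decomposition $H_{n+1}(\bu)=\bigoplus_{e\in\mathcal{E}_J}H_n(\bu)\,e$. Invertibility of the resulting unitriangular matrix yields both spanning and uniqueness, completing the proof.
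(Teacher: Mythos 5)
Your proposal is correct, and it rests on the same two pillars as the paper's own proof: the reduction to Theorem~\ref{Them:Inductive-basis} and the identity $t_k=J_k-L_k$ of Equ.~(\ref{Equ:J-JM}), with $L_k$ a group-algebra term of Jucys--Murphy degree zero. The difference is one of direction and bookkeeping. The paper expands the other way: it writes each type-(III)/(IV) $J$-word as a combination of the words (a)--(d) by induction on the exponent, via $J_i^{p+1}=J_i^p(t_i+L_i)$; this gives spanning directly, but the closure of the span of (a)--(d) under right multiplication by $t_i$ and $L_i$, and the uniqueness, are essentially asserted. Your version expresses $\mathcal{E}_t$ in terms of $\mathcal{E}_J$ and gets spanning and linear independence simultaneously from unitriangularity of the transition matrix, which is cleaner; it also isolates exactly what must be checked, namely that the free decomposition $H_{n+1}(\bu)=\bigoplus_{e}H_n(\bu)\,e$ is compatible with the JM-degree filtration, i.e.\ a degree-$d$ element decomposes with $\deg(h_e)+\mathrm{wt}(e)\leq d$. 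That compatibility is true and provable by a straightening recursion, but note that the identities doing the work there are Lemma~\ref{Lem:si-Jj}(i) and Lemma~\ref{Lemm:J-s_n} (moving $s_j$ past powers of $J$), not Lemma~\ref{Lemm:s-t}(vi) and Corollary~\ref{Cor:t-n-n+1}, which govern $t$--$t$ commutation and are what the \emph{paper's} direction needs when it right-multiplies type-(d) words by $t_i$. You cite the latter at your crux and leave the compatibility statement itself as an assertion; since this is roughly the level of detail of the paper's own inductive step, I would not call it a gap, but it is the one place where the recursion should be written out (a dimension count $\dim H_{n+1}(\bu)=(n+1)m\dim H_n(\bu)$ would also let you shortcut from spanning alone, as the paper implicitly does).
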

\begin{proof}
By Theorem~\ref{Them:Inductive-basis}, we need to express words of types (III) and (IV) in terms of words of type (a)--(d). Let \(h = h_{n}s_n \cdots s_i J_i^k\) (\(1 \leq k\leq m-1\)) be a word of type (III). We prove the assertion holds  by induction on $k$.

For $k = 1$, $J_i=t_i+L_i$ shows
\begin{align*}
 h &= h_{n}s_n \cdots s_i(t_i + L_i) = h_{n}s_n \cdots s_i t_i + h_{n}s_n \cdots s_i L_i,
\end{align*}
which is a combination of types (a), (b), (c). Assume the claim holds for $k = p$. For $k = p+1$, we have
\begin{align*}
  h_{n}s_n \cdots s_i J_i^{p+1}& = h_{n}s_n \cdots s_i J_i^p (t_i + L_i),
\end{align*}
which decomposes into terms handled by induction. Similar argument shows the assertion holds  for words of type (IV).
\end{proof}

\begin{theorem}\label{Them:t-basis}Keeping notations as above, then the set
\begin{align*}
\mathscr{T}_n := \left\{ t_{i_1}^{k_1} \cdots t_{i_r}^{k_r} w \;\middle|\; 1 \leq i_1 < \cdots < i_r \leq n,\; 0 \leq k_1, \ldots, k_r\leq m-1,\; w \in \mathfrak{S}_n \right\}
\end{align*}
forms a \(\mathbb{C}(\boldsymbol{u})\)-basis of \(H_n(\boldsymbol{u})\).
\end{theorem}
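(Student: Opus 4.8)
The plan is to prove this by induction on $n$, using the inductive basis of Theorem~\ref{Them:Basis-for-Markov} as the engine. First I would record the cardinality: a member of $\mathscr{T}_n$ is determined uniquely by an exponent vector $(e_1,\ldots,e_n)\in\{0,1,\ldots,m-1\}^n$ (the factor $t_i^{e_i}$ being omitted when $e_i=0$) together with a permutation $w\in\mathfrak{S}_n$, so $|\mathscr{T}_n|=m^n n!=\dim_{\mathbb{K}}H_n(\bu)$. Hence it would suffice to show $\mathscr{T}_n$ spans; in fact the argument below shows more, namely that $\mathscr{T}_{n+1}$ coincides, element by element, with the inductive basis of Theorem~\ref{Them:Basis-for-Markov}. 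The base case $n=1$ is immediate, since $\mathscr{T}_1=\{1,t,\ldots,t^{m-1}\}$ is visibly a $\mathbb{K}$-basis of $H_1(\bu)\cong\mathbb{K}[t]/\big((t-u_1)\cdots(t-u_m)\big)$.

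For the inductive step I assume $\mathscr{T}_n$ is a $\mathbb{K}$-basis of $H_n(\bu)$. By Theorem~\ref{Them:Basis-for-Markov}, letting $h_n$ range over $\mathscr{T}_n$ produces a basis of $H_{n+1}(\bu)$ consisting of the four-type words $h_n$, $h_ns_n\cdots s_i$, $h_ns_n\cdots s_it_i^k$ and $h_nt_{n+1}^k$ (with $1\le i\le n$, $1\le k<m$). I would then set up a bijection between $\mathscr{T}_{n+1}$ and this collection of words under which corresponding elements are \emph{equal} in $H_{n+1}(\bu)$; this immediately yields that $\mathscr{T}_{n+1}$ is a basis. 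The bijection rests on two facts: the parabolic coset decomposition $\mathfrak{S}_{n+1}=\bigsqcup\mathfrak{S}_n\,d$ with $d\in\{1,s_n,s_ns_{n-1},\ldots,s_n\cdots s_1\}$, so that any $w\in\mathfrak{S}_{n+1}$ factors uniquely as $w=w'd_i$ with $w'\in\mathfrak{S}_n$ and $d_i=s_n\cdots s_i$ (or $d=1$); and the fact, read off from Lemma~\ref{Lemm:s-t}(i) with $b=n+1$, that $t_{n+1}$ commutes with every element of $\mathfrak{S}_n=\langle s_1,\ldots,s_{n-1}\rangle$.

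Concretely, I take a typical element $t_{i_1}^{k_1}\cdots t_{i_r}^{k_r}w$ of $\mathscr{T}_{n+1}$ and split on whether $i_r=n+1$. If $i_r\le n$, the $t$-part already lies in $H_n(\bu)$; writing $w=w'd_i$ and absorbing $w'$ gives $h_n$ (when $d=1$) or $h_ns_n\cdots s_i$ (when $d=d_i$), i.e. words of the first two types. If $i_r=n+1$, I write the element as $g\,t_{n+1}^{k_r}w$ with $g\in H_n(\bu)$; since $t_{n+1}$ commutes with $w'\in\mathfrak{S}_n$, it equals $h_n\,t_{n+1}^{k_r}d_i$ with $h_n=gw'\in\mathscr{T}_n$. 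When $d=1$ this is a word of the fourth type. When $d=d_i=s_n\cdots s_i$, the key computation is
\begin{equation*}
t_{n+1}^{k}\,s_ns_{n-1}\cdots s_i=s_ns_{n-1}\cdots s_i\,t_i^{k},
\end{equation*}
which I would prove by first using $t_{n+1}=s_nt_ns_n$ (Lemma~\ref{Lemm:s-t}(ii)) to get $t_{n+1}^{k}s_n=s_nt_n^{k}$, and then repeatedly applying $t_j^{k}s_{j-1}=s_{j-1}t_{j-1}^{k}$ (again Lemma~\ref{Lemm:s-t}(ii), since it gives $t_j=s_{j-1}t_{j-1}s_{j-1}$) to migrate the $t$-power down to index $i$. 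This turns the element into $h_ns_n\cdots s_it_i^{k_r}$, a word of the third type. The assignment is reversible — the coset datum $d_i$ and the unique $\mathscr{T}_n$-expression of $h_n$ recover $w$ and the $t$-monomial — so it is a bijection, completing the induction.

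The main obstacle is the bookkeeping that makes this correspondence an honest \emph{equality} (and a genuine bijection) rather than a mere triangular or spanning relation: one must track the coset representative, the commutation of $t_{n+1}$ past $\mathfrak{S}_n$, and the migration identity above simultaneously, and verify that no two distinct members of $\mathscr{T}_{n+1}$ collapse onto the same inductive-basis word. Once these are in place the conclusion drops out of Theorem~\ref{Them:Basis-for-Markov} together with the inductive hypothesis, with no dimension count even needed beyond the sanity check in the first paragraph.
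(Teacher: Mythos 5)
Your proposal is correct and is essentially the paper's own argument: both proceed by induction on $n$ through Theorem~\ref{Them:Basis-for-Markov}, matching the inductive-basis words of types (a)--(d) with elements of $\mathscr{T}_{n+1}$ by means of the commutation of $t_{n+1}$ with $\mathfrak{S}_n$ (Lemma~\ref{Lemm:s-t}(i)) and the migration identity $t_{n+1}^{k}s_n\cdots s_i=s_n\cdots s_it_i^{k}$ (iterated Lemma~\ref{Lemm:s-t}(ii)). The only differences are cosmetic: the paper rewrites each inductive-basis word into $\mathscr{T}_n$-form while you run the same computation in the opposite direction, and you make explicit the cardinality count $|\mathscr{T}_n|=m^{n}n!$ that the paper leaves implicit.
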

\begin{proof}
By Theorem~\ref{Them:Basis-for-Markov}, it suffices to show that the inductive basis (a)--(d) lies in \(\mathscr{T}_n\). Firstly an induction argument shows  $h_{n-1} \in \mathscr{T}_{n-1} \subset \mathscr{T}_n$ when $h_{n-1} \in H_{n-1}(\boldsymbol{u})$.  For any $h_{n-1}\in H_{n-1}(\boldsymbol{u})$,  we assume that  $h_{n-1} = t_{i_1}^{k_1} \cdots t_{i_r}^{k_r} \sigma$ with $\sigma\in \mathfrak{S}_{n-1}$. Then
\begin{eqnarray*}
 && h_{n-1}s_{n-1} \cdots s_i= t_{i_1}^{k_1} \cdots t_{i_r}^{k_r} \sigma s_{n-1} \cdots s_i \in \mathscr{T}_n,\\
 &&h_{n-1}s_{n-1} \cdots s_i t_i^k=t_{i_1}^{k_1} \cdots t_{i_r}^{k_r} \sigma t_{n}^k s_{n-1} \cdots s_i \in \mathscr{T}_n,\\
&&h_{n-1}t_{n+1}^k=t_{i_1}^{k_1} \cdots t_{i_r}^{k_r} \sigma t_{n}^k s_{n-1} \cdots s_i \in \mathscr{T}_n.
\end{eqnarray*}
This proof is completed.
\end{proof}
\section{Normalized Markov traces}\label{Sec:Markov-traces}
In this section, we construct a $\mathbb{C}(\boldsymbol{u})$--linear function on $H_n(\boldsymbol{u})$ and demonstrate it constitutes a normalized Markov trace through Lambropoulou's approach from \cite{L}, thereby proving Theorem~\ref{Them:Markov} from the introduction.

The following lemma is a degeneration version of \cite[Lemma~6]{L}, which can be proved through analogous reasoning:
\begin{lemma}\label{Lemm:Bimodules-iso}Define the map
\begin{eqnarray*}
&&\phi_n:H_n(\boldsymbol{u})\otimes_{H_{n-1}(\boldsymbol{u})}H_{n}(\boldsymbol{u})\bigoplus (\oplus_{i=0}^{m-1}H_{n}(\boldsymbol{u}))\longrightarrow H_{n+1}(\boldsymbol{u}),\end{eqnarray*}
by
\begin{eqnarray*}&&\qquad\qquad a\otimes b\bigoplus (\oplus_{i=0}^{m-1}a_i)\mapsto as_nb+\sum_{i=0}^{m-1}a_kt_{n+1}^i.
\end{eqnarray*}
Then $\phi_n$ is an $H_n(\boldsymbol{u})$-bimodule isomorphism.
\end{lemma}
\begin{proof}Theorem~\ref{Them:Basis-for-Markov} establishes that
  \begin{eqnarray*}
  &&\mathscr{B}_n=\{s_{n-1}\cdots s_i\,|\,1\leq i\leq n-1\}\cup\{t_n^k\,|\,0\leq k\leq m-1\}\\
  &&\qquad\qquad\cup\{s_{n-1}\cdots s_{i}t_i^k\,|\,2\leq i\leq n-2,1\leq k\leq m-1\}
  \end{eqnarray*}
  forms basis for $H_n(\boldsymbol{u})$ as a free $H_{n-1}(\boldsymbol{u})$-module. The universal property of tensor products yields:
  \begin{eqnarray*}
  H_n(\boldsymbol{u})\otimes_{H_{n-1}(\boldsymbol{u})}H_{n}(\boldsymbol{u})&=&H_n(\boldsymbol{u})\otimes_{H_{n-1}(\boldsymbol{u})}
  (\oplus_{\alpha\in\mathscr{B}_n}H_{n-1}(\boldsymbol{u})\cdot\alpha)\\
  &=&\oplus_{\alpha\in\mathscr{B}_n} (H_n(\boldsymbol{u})\otimes_{H_{n-1}(\boldsymbol{u})}H_{n-1}(\boldsymbol{u})\cdot\alpha)\\
  &=&\oplus_{\alpha\in\mathscr{B}_n}H_n(\boldsymbol{u})\cdot\alpha.
                  \end{eqnarray*}
Thus
 \begin{eqnarray*}
  H_n(\boldsymbol{u})\otimes_{H_{n-1}(\boldsymbol{u})}H_{n}(\boldsymbol{u})\bigoplus(\oplus_{i=0}^{m-1}H_n(\boldsymbol{u}))
  &=&\oplus_{\alpha\in\mathscr{B}_n}H_n(\boldsymbol{u})\cdot\alpha\bigoplus(\oplus_{i=0}^{m-1}H_n(\boldsymbol{u})).
                  \end{eqnarray*}
Similarly, the set
  \begin{eqnarray*}
  &&\mathscr{B}_{n+1}=\{s_ns_{n-1}\cdots s_i\,|\,1\leq i\leq n\}
  \cup\{t_{n+1}^k\,|\,0\leq k\leq m-1\}\\
  &&\qquad\qquad\qquad\cup\{s_{n}\cdots s_{i}t_i^k\,|\,1\leq i\leq n-1,1\leq k\leq m-1\}
  \end{eqnarray*}
 forms a basis of $H_{n+1}(\boldsymbol{u})$ as a free $H_{n}(\boldsymbol{u})$-module. Observing that
\begin{eqnarray*}
  &&\mathscr{B}_{n+1}=\{s_n\alpha\,|\,\alpha\in\mathscr{B}_{n}\}\cup\{t_{n+1}^k\,|\,0\leq k\leq m-1\},
  \end{eqnarray*}
  we conclude $\phi_n$ bijectively maps basis elements to $\mathscr{B}_{n+1}$, establishing the  bimodule isomorphism.
\end{proof}

Given $z, y_1, \ldots, y_{m-1}\in \mathbb{C}(\boldsymbol{u})$,
we inductively define a $\mathbb{C}(\boldsymbol{u})$-linear function $\mathrm{tr}$ on $H_{\infty}(\boldsymbol{u})$ as follows:
Given an element $x\in H_{n+1}(\boldsymbol{u})$ expressed via Lemma~\ref{Lemm:Bimodules-iso} as
\begin{equation*}
  x:=\phi_n\left(a\otimes b\oplus\left(\oplus_{k=0}^{m-1}\alpha_kt_{n+1}^k\right)\right),
\end{equation*}
 we define
\begin{eqnarray}\label{Equ:tr-def}
&& \mathrm{tr}(x):=z\mathrm{\,tr}(ab)+\mathrm{tr}(\alpha_0)+\sum_{k=1}^{m-1}
y_k\mathrm{tr}(\alpha_k),
\end{eqnarray}
where $y_k=\mathrm{tr}(t_{n+1}^k)$ (independent of $n$) and $\mathrm{tr}(1)=1$. This construction satisfies properties (m1), (m3), and (m4) of Theorem~\ref{Them:Markov} in Introduction.

To show that the $\mathbb{C}(\boldsymbol{u})$-linear function $\mathrm{tr}$ defined by Eq.~(\ref{Equ:tr-def}) satisfies  $\mathrm{tr}(\alpha\beta)=\mathrm{tr}(\beta\alpha)$ for all $\alpha,\beta\in H_{\infty}(\boldsymbol{u})$, we need the following lemmas.

\begin{lemma}\label{Lemm:xs_nys_n}For $x,y\in H_{n}(\boldsymbol{u})$,
\begin{align*}\mathrm{tr}(xs_nys_n)=\mathrm{tr}(s_nxs_ny).
  \end{align*}
In particular,  \begin{align*}
 \mathrm{tr}(t_n^ks_ns_{n-1}s_n)=\mathrm{tr}(s_nt_n^{k}s_ns_{n-1}).
  \end{align*}
 \end{lemma}
\begin{proof}Clearly the second equality follows directly from the first one by letting $x=t_n^k$ and $y=s_{n-1}$. We use the case analysis to establish the first equality:

\textbf{Case 1:} $x,y\in H_{n-1}(\boldsymbol{u})$. Then $s_nx=xs_n$ and $s_ny=ys_n$, which implies
\begin{align*}\mathrm{tr}(xs_nys_n)=\mathrm{tr}(xy)=\mathrm{tr}(s_nxs_ny).
  \end{align*}

  \textbf{Case 2:} $x\in H_{n-1}(\boldsymbol{u})$ and $y=\alpha s_{n-1}\beta$ with $\alpha,\beta\in H_{n-1}(\boldsymbol{u})$, or vice versa. Then $s_nx=xs_n$, $s_n\alpha=\alpha s_n$, and $s_n\beta=\beta s_n$, which show
  \begin{align*}
   \mathrm{tr}(xs_nys_n)&=\mathrm{tr}(xs_n\alpha s_{n-1}\beta s_n)=\mathrm{tr}(x \alpha s_{n-1}s_{n}s_{n-1}\beta)=z\mathrm{tr}(x\alpha\beta),\\
   \mathrm{tr}(s_nxs_ny)&=\mathrm{tr}(xy)=\mathrm{tr}(x\alpha s_{n-1}\beta)=z\mathrm{tr}(x\alpha\beta).
  \end{align*}

  \textbf{Case 3:} $x=\alpha s_{n-1}\beta$ and $y=\delta s_{n-1}\gamma$ with  $\alpha,\beta, \delta,\gamma\in H_{n-1}(\boldsymbol{u})$. Then
  \begin{align*}
   \mathrm{tr}(\alpha s_{n-1}\beta s_n \delta s_{n-1}\gamma s_n)&=\mathrm{tr}(\alpha s_{n-1}\beta  \delta (s_ns_{n-1}s_n)\gamma )\\&=z\mathrm{tr}(\alpha s_{n-1}\beta \delta\gamma)\\&=z^2\mathrm{tr}(\alpha\beta\delta\gamma),\\
   \mathrm{tr}(s_n\alpha s_{n-1}\beta s_n \delta s_{n-1}\gamma)&=\mathrm{tr}(\alpha (s_ns_{n-1}s_n\beta\delta s_{n-1}\gamma)\\&=z\mathrm{tr}(\alpha\beta\delta s_{n-1}\gamma)\\&=z^2\mathrm{tr}(\alpha\beta\delta \gamma).
  \end{align*}

 \textbf{Case 4:} $x=\alpha t_{n}^{\ell}$ and $y=\beta t_n^{k}$ with $\alpha, \beta\in H_{n-1}(\boldsymbol{u})$, $0\leq \ell,k<m$. Then
 \begin{align*}
   \mathrm{tr}(\alpha t_{n}^{\ell}s_n\beta t_n^ks_n)&=\mathrm{tr}(\alpha t_{n}^{\ell}\beta (s_nt_n^ks_n))=y_k\mathrm{tr}(\alpha t_{n}^{\ell}\beta)=y_ky_{\ell}\mathrm{tr}(\alpha\beta),\\
   \mathrm{tr}(s_n\alpha t_{n}^{\ell}s_n\beta t_{n}^k)&=\mathrm{tr}(\alpha (s_nt_n^{\ell}s_n)\beta t_n^{k})=y_{\ell}\mathrm{tr}(\alpha\beta t_n^{k})=y_ky_{\ell}\mathrm{tr}(\alpha\beta).
  \end{align*}

  \textbf{Case 5:} $x=\alpha s_{n-1}\beta$ and $y=\gamma t_n^{\ell}$ with $\alpha,\beta, \gamma\in H_{n-1}(\boldsymbol{u})$ and $1\leq \ell<m$, or vice versa. Then
  \begin{align*}
   \mathrm{tr}(\alpha s_{n-1}\beta s_n\gamma t_{n}^{\ell}s_n)&=\mathrm{tr}(\alpha s_{n-1}\beta \gamma (s_nt_{n}^{\ell}s_n))\\&=y_{\ell}\mathrm{tr}(\alpha s_{n-1}\beta \gamma)\\&=zy_{\ell}\mathrm{tr}(\alpha\beta\gamma),\\
   \mathrm{tr}(s_n\alpha s_{n-1}\beta s_n\gamma t_{n}^{\ell})&=\mathrm{tr}(\alpha (s_n s_{n-1}s_n)\beta\gamma t_n^{\ell})\\&=z\mathrm{tr}(\alpha\beta\gamma t_n^{\ell})\\&=zy_{\ell}\mathrm{tr}(\alpha\beta\gamma).
  \end{align*}
  We complete the proof.
\end{proof}

\begin{lemma}\label{Lemm:Type-II}For $h_{n}\in H_n(\boldsymbol{u})$  and $1\leq i\leq n-1$, we have
\begin{eqnarray*}
 &&\mathrm{tr}(h_{n}s_ns_{n-1}\cdots s_it)=\mathrm{tr}(th_{n}s_ns_{n-1}\cdots s_i)\\
 &&\mathrm{tr}(h_{n}s_ns_{n-1}\cdots s_is_j)=\mathrm{tr}(s_jh_{n}s_ns_{n-1}\cdots s_i) \text{ for }j=1, \ldots, n. \end{eqnarray*}
   \end{lemma}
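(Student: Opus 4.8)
The plan is to treat this lemma as one step of a single induction on $n$ that establishes axiom (m2), so that while analysing elements of $H_{n+1}(\bu)$ I may assume $\mathrm{tr}$ already restricts to a trace on $H_n(\bu)$; that is, $\mathrm{tr}(\gamma\delta)=\mathrm{tr}(\delta\gamma)$ for all $\gamma,\delta\in H_n(\bu)$. Two ingredients carry the whole argument. The first is the \emph{peeling rule} $\mathrm{tr}(as_nb)=z\,\mathrm{tr}(ab)$ for $a,b\in H_n(\bu)$, which is immediate from the defining formula (\ref{Equ:tr-def}) because $as_nb=\phi_n(a\otimes b\oplus 0)$ in the notation of Lemma~\ref{Lemm:Bimodules-iso}. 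The second is Lemma~\ref{Lemm:xs_nys_n}, which covers precisely the one situation where an $s_n$ cannot be swallowed into $H_n(\bu)$. For both displayed identities the recipe is the same: peel the leading $s_n$ off each side so that everything drops into $H_n(\bu)$, and then quote the inductive hypothesis.

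For the first identity, write $s_ns_{n-1}\cdots s_i=s_n\,(s_{n-1}\cdots s_i)$ and observe that both $s_{n-1}\cdots s_it$ and $s_{n-1}\cdots s_i$ lie in $H_n(\bu)$, since every index that occurs is at most $n-1$. Hence
\[
h_ns_ns_{n-1}\cdots s_it=h_n\,s_n\,(s_{n-1}\cdots s_it),\qquad th_ns_ns_{n-1}\cdots s_i=(th_n)\,s_n\,(s_{n-1}\cdots s_i),
\]
and the peeling rule converts the two traces into $z\,\mathrm{tr}(h_ns_{n-1}\cdots s_i\,t)$ and $z\,\mathrm{tr}(t\,h_ns_{n-1}\cdots s_i)$. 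Putting $\gamma=h_ns_{n-1}\cdots s_i\in H_n(\bu)$, the claim collapses to $\mathrm{tr}(\gamma t)=\mathrm{tr}(t\gamma)$ with $\gamma,t\in H_n(\bu)$, which is the inductive hypothesis.

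For the second identity fix $j$. If $j\le n-1$ then $s_j\in H_n(\bu)$ and the same peeling applies: from
\[
h_ns_ns_{n-1}\cdots s_is_j=h_n\,s_n\,(s_{n-1}\cdots s_is_j),\qquad s_jh_ns_ns_{n-1}\cdots s_i=(s_jh_n)\,s_n\,(s_{n-1}\cdots s_i),
\]
where $s_j$ is merely appended after the $H_n(\bu)$-tail $s_{n-1}\cdots s_i$ and is never moved across $s_n$, the peeling rule reduces both sides to $z\,\mathrm{tr}(\gamma s_j)$ and $z\,\mathrm{tr}(s_j\gamma)$ with $\gamma=h_ns_{n-1}\cdots s_i\in H_n(\bu)$ and $s_j\in H_n(\bu)$, and I again invoke the inductive hypothesis. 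The leftover case $j=n$ is the one that resists absorption, but here, writing $x=h_n$ and $y=s_{n-1}\cdots s_i\in H_n(\bu)$, one has $h_ns_ns_{n-1}\cdots s_is_n=xs_nys_n$ and $s_nh_ns_ns_{n-1}\cdots s_i=s_nxs_ny$, so the required equality is exactly $\mathrm{tr}(xs_nys_n)=\mathrm{tr}(s_nxs_ny)$, which is Lemma~\ref{Lemm:xs_nys_n}.

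The only genuinely delicate point is logical rather than computational: every reduction terminates in an identity $\mathrm{tr}(\gamma g)=\mathrm{tr}(g\gamma)$ with $\gamma\in H_n(\bu)$ and $g\in\{t,s_1,\dots,s_{n-1}\}$, so the proof is sound only inside the global induction that proves (m2), whose base case is the commutativity of $H_1(\bu)$ and in which this lemma is used solely in the passage from $H_n(\bu)$ to $H_{n+1}(\bu)$. I expect the real work to sit entirely in the case $j=n$, which is why Lemma~\ref{Lemm:xs_nys_n} was isolated first; should one prefer to keep the statement self-contained, the alternative is to expand $h_n$ in the $H_{n-1}(\bu)$-basis of Theorem~\ref{Them:Basis-for-Markov} and verify each of the four resulting word-types directly, at the cost of a longer case analysis in the spirit of the proof of Lemma~\ref{Lemm:xs_nys_n}.
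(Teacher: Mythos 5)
Your proof is correct and follows essentially the same route as the paper's: peel the outer $s_n$ via the rule $\mathrm{tr}(as_nb)=z\,\mathrm{tr}(ab)$ coming from Equ.~(\ref{Equ:tr-def}), invoke the inductive hypothesis that $\mathrm{tr}$ is already a trace on $H_n(\bu)$, and dispose of the exceptional case $j=n$ by Lemma~\ref{Lemm:xs_nys_n}. The only cosmetic difference is that you peel both sides and compare, while the paper chains the equalities through one side; the substance, including the framing inside the global induction establishing (m2), is identical.
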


\begin{proof} By an induction argument, we obtain
\begin{align*}
\mathrm{tr}(h_{n-1}s_ns_{n-1}\cdots s_it)&=z\mathrm{\,tr}(h_{n}s_{n-1}\cdots s_it)\\
 &=z\mathrm{\,tr}(th_{n}s_{n-1}\cdots s_i)\\
 &=\mathrm{tr}(th_{n}s_n\cdots s_i). \end{align*}
 For $j=1, \ldots, n-1$, we have
 \begin{align*}
 \mathrm{tr}(h_{n}s_n\cdots s_is_j)&=z\mathrm{\,tr}(h_{n}s_{n-1}\cdots s_is_j)\\
 &=z\mathrm{\,tr}(s_jh_{n}s_{n-1}\cdots s_i)\\
 &=\mathrm{tr}(s_jh_{n}s_n\cdots s_i), \end{align*}
 where the first equality and last one follow by using Theorem~\ref{Them:Markov}~(m3). For $j=n$, the second equality follows directly by applying Lemma~\ref{Lemm:xs_nys_n}.
 \end{proof}

\begin{lemma}\label{Lemm:Typer-III}For $h_{n}\in H_{n}(\boldsymbol{u})$ and $1\leq i\leq n-1$,  \begin{align*}
\mathrm{tr}(h_{n}s_ns_{n-1}\cdots s_{i}t_{i}^ks_n)&=
\mathrm{tr}(s_nh_{n}s_ns_{n-1}\cdots s_{i}t_{i}^k),\\
\mathrm{tr}(h_{n}s_ns_{n-1}\cdots s_{i}t_{i}^kt)&=
\mathrm{tr}(th_{n}s_ns_{n-1}\cdots s_{i}t_{i}^k).
\end{align*}
\end{lemma}
\begin{proof}Clearly Lemma~\ref{Lemm:xs_nys_n} implies the first equality and the second equality holds for $n=1$. Assume that the second equality holds for $n\geq 1$. We show it holds for $n+1$. By applying rule (m3), we obtain
\begin{align*}
\mathrm{tr}(h_{n}s_ns_{n-1}\cdots s_{i}t_{i}^kt)&=z\,
\mathrm{tr}(h_{n}s_{n-1}\cdots s_{i}t_{i}^kt)\\
&=z\,\mathrm{tr}(th_{n}s_{n-1}\cdots s_{i}t_{i}^k)\\
&=\mathrm{tr}(th_{n}s_ns_{n-1}\cdots s_{i}t_{i}^k).
\end{align*}
Thus the proof is completed.
\end{proof}

\begin{lemma}\label{Lemm:Type-IV}For $h_{n}\in H_{n}(\boldsymbol{u})$ and $1\leq k\leq m-1$,
\begin{align*}
\mathrm{tr}(h_{n}t_{n+1}^kt)&=\mathrm{tr}(th_{n}t_{n+1}^k),\\
\mathrm{tr}(h_{n}t_{n+1}^ks_i)&=\mathrm{tr}(s_ih_{n}t_{n+1}^k) \text{ for }i=1, \ldots, n.
\end{align*}
\end{lemma}
\begin{proof}Note that $s_{n,1}s_{2,n}=s_{1,n-1}s_ns_{n-1,1}$. Thanks to Lemma~\ref{Lemm:s-t}(vi) and Eq.~\eqref{Equ:tr-def}, we get
\begin{align*}
\mathrm{tr}(h_{n}t_{n+1}^kt)&=\mathrm{tr}(h_{n}tt_{n+1}^k)+\mathrm{tr}(h_{n}t^ks_{n,1}s_{2,n})
-\mathrm{tr}(h_{n}s_{n,2}s_{1,n}t^k)\\
&=y_k\mathrm{tr}(h_{n}t).
\end{align*}
On the other hand, Eq.~\eqref{Equ:tr-def} shows
\begin{align*}
\mathrm{tr}(th_{n}t_{n+1}^k)&=y_k\mathrm{tr}(th_{n}).
\end{align*}
Finally the induction argument shows $\mathrm{tr}(th_{n})=\mathrm{tr}(h_{n}t)$. Thus  the first equality holds.

For $i=1, \ldots, n-1$, we have
\begin{align*}
 t_{n+1}^ks_i&=s_n\cdots s_1t^ks_1\cdots s_{i-1}(s_is_{i+1}s_i)s_{i+2}\cdots s_n \\
 &=s_n\cdots s_1t^ks_1\cdots s_{i-1}s_{i+1}s_is_{i+1}s_{i+2}\cdots s_n\\
 &=s_n\cdots s_{i+2} (s_{i+1}s_is_{i+1})s_{i-1}\cdots s_1t^ks_1\cdots  s_n\\
 &=s_it_{n+1}^k.
\end{align*}
 Then Eq.~\eqref{Equ:tr-def} implies \begin{align*}
\mathrm{tr}(h_{n}t_{n+1}^ks_i)&=\mathrm{tr}(h_{n}s_it_{n+1}^k)=y_k\mathrm{tr}(h_{n}s_i).
\end{align*}
While $\mathrm{tr}(s_ih_{n}t_{n+1}^k)=y_k\mathrm{tr}(s_ih_{n})$. Thus the second equality holds for $i=1, \ldots,n-1$ by the induction argument.

For $i=n$, clearly
\begin{align*}
 \mathrm{tr}(h_{n}t_{n+1}^ks_n)=\mathrm{tr}(h_{n}s_{n}t_n^{k})=z\,\mathrm{tr}(h_{n}t_n^{k}).
\end{align*}
 Case by case calculation shows: for  $h_{n}\in H_{n-1}(\boldsymbol{u})$,
 \begin{align*}
  \mathrm{tr}(s_nh_{n}t_{n+1}^k)=\mathrm{tr}(h_{n}t_{n}^ks_n)=z\,\mathrm{tr}(h_{n}t_{n}^k);
 \end{align*}
For $h_{n}=\alpha s_{n-1}\beta\in H_{n}(\boldsymbol{u})$ where $\alpha,\beta\in H_{n-1}(\boldsymbol{u})$,
\begin{align*}
\mathrm{tr}(s_nh_{n}t_{n+1}^k)&=
\mathrm{tr}((\alpha  s_{n-1})s_n(s_{n-1}\beta t_{n}^k)s_n)\\
&=\mathrm{tr}(s_n(\alpha  s_{n-1})s_n(s_{n-1}\beta t_{n}^k)) \quad \text{(Lemma~\ref{Lemm:xs_nys_n})}\\
&=\mathrm{tr}(\alpha s_{n-1}s_n\beta t_{n}^k)\\
&=z\,\mathrm{tr}(h_{n}t_{n}^k),
\end{align*}
where the last equality follows by applying Eq.~\eqref{Equ:tr-def};

 For $h_{n}=\alpha t_{n}^{\ell}\in H_{n}(\boldsymbol{u})$ where $\alpha\in H_{n-1}(\boldsymbol{u})$ and $1\leq \ell\leq m-1$,
\begin{align*}
\mathrm{tr}(s_n\alpha t_n^{\ell} t_{n+1}^k)
&=\mathrm{tr}(\alpha s_n t_n^{\ell}t_{n+1}^k)\\
&=\mathrm{tr}(\alpha t_n^{k}s_nt_n^{\ell})+\sum_{i=1}^{\ell}\mathrm{tr}(\alpha t_{n+1}^{\ell-i}t_{n}^{k-i-2})-\sum_{i=1}^{\ell}\mathrm{tr}(\alpha t_{n+1}^{k-i-2}t_{n}^{\ell-i})\\
&=z\,\mathrm{tr}(h_{n}t_{n}^k),
\end{align*}
where the second equality follows by applying Corollary~\ref{Cor:t-n-n+1} and the last equality follows by applying Eq.~\eqref{Equ:tr-def}.  \end{proof}

Now we can show that the linear function $\mathrm{tr}$ defined by Eq.~(\ref{Equ:tr-def}) is a trace on $H_{\infty}(\boldsymbol{u})$.

\begin{theorem}\label{Them:alpha-beta=beta-alpha}For all $\alpha,\beta\in H_{\infty}(\boldsymbol{u})$,
\begin{align*}\mathrm{tr}(\alpha\beta)=\mathrm{tr}(\beta\alpha).
\end{align*}
\end{theorem}

\begin{proof}By induction argument, it suffices to show that
\begin{align*}
  \mathrm{tr}(\alpha s_i)=\mathrm{tr}(s_i\alpha)&\text{ and }
  \mathrm{tr}(\alpha t)=\mathrm{tr}(t\alpha)
\end{align*}
for all $\alpha\in H_{n+1}(\boldsymbol{u})$ and $i=1,\ldots,n$.
Thanks to Theorem~\ref{Them:Basis-for-Markov}, if $\alpha$ is of type (a),  the induction step implies $\mathrm{tr}(\alpha t)=\mathrm{tr}(t\alpha)$ and $\mathrm{tr}(\alpha s_i)=\mathrm{tr}(s_i\alpha)$ for $i=1, \ldots, n-1$. Further Eq.~\eqref{Equ:tr-def} shows
\begin{equation*}
  \mathrm{tr}(\alpha s_n)=z\mathrm{\,tr}(\alpha)=\mathrm{tr}(s_n\alpha).
\end{equation*}
For $\alpha$ being of type (b), (c) and (d), the assertion follows by applying Lemmas~\ref{Lemm:Type-II}, \ref{Lemm:Typer-III} and  \ref{Lemm:Type-IV} respectively.
\end{proof}

Now we are ready to prove Theorem~\ref{Them:Markov} in the introduction.
\begin{proof}[Proof of Theorem~\ref{Them:Markov}] (m1), (m3), (m4) follow from its construction (see Eq.~\eqref{Equ:tr-def}, while (m2) derives from Theorem~\ref{Them:alpha-beta=beta-alpha}.  Uniqueness follows from Theorem~\ref{Them:Basis-for-Markov}.\end{proof}

Let us remark that the uniqueness of the normalized Markov trace $\mathrm{tr}$ can be also proved by applying Geck and Lambropoulou's argument in \cite[Lemma~4.2]{GL}.  More precisely, we have the following fact.

\begin{lemma}Let $\tau: H_n(\boldsymbol{u})\rightarrow\mathbb{C}(\boldsymbol{u},z)$ be a normalized Markov trace.  If $n\geq 1$, $k\geq 1$, $h\in H_{n}$, and $1 \leq i_2,\ldots, i_{k}\leq m-1$, then
   \begin{align*}
     \tau(hs_{n}t_{n+2}^{i_2}\cdots t_{n+k}^{i_k})&=z\tau(ht_{n+1}^{i_2}t_{n+2}^{i_3}\cdots t_{n+k-1}^{i_k})
   \end{align*}
   and
   \begin{align*}
     \tau(ht_{n+2}^{i_2}\cdots t_{n+k}^{i_k})&=\tau(ht_{n+1}^{i_2}t_{n+2}^{i_3}\cdots t_{n+k-1}^{i_k}).
   \end{align*}
   \end{lemma}

\begin{proof}We prove the first equality by an induction argument on $k$. For $k=1$, it follows directly by applying Definition~\ref{Def:Markov-traces}. Now we assume that $k>1$. Then we have
   \begin{align*}
     \tau(hs_{n}t_{n+2}^{i_2}\cdots t_{n+k}^{i_k})&=\tau(hs_{n}s_{n+1}t_{n+1}^{i_2}s_{n+1}t_{n+3}^{i_3}\cdots t_{n+k}^{i_k})&\mathrm{(}t_{n+2}=s_{n+1}t_{n+1}s_{n+1}\mathrm{)}\\
     &=\tau(hs_{n}s_{n+1}t_{n+1}^{i_2}t_{n+3}^{i_3}\cdots t_{n+k}^{i_k}s_{n+1})
     &(\text{The proof of Lemma}~\ref{Lemm:Type-IV})\\
     &=\tau(s_{n+1}hs_{n}s_{n+1}t_{n+1}^{i_2}t_{n+3}^{i_3}\cdots t_{n+k}^{i_k})&(\tau \text{ is a trace})\\
     &=\tau(hs_{n+1}s_{n}s_{n+1}t_{n+1}^{i_2}t_{n+3}^{i_3}\cdots t_{n+k}^{i_k})&(hs_{n+1}=s_{n+1}h)\\
     &=\tau(hs_ns_{n+1}s_{n}t_{n+1}^{i_2}t_{n+3}^{i_3}\cdots t_{n+k}^{i_k})&(\text{Braid relations})\\
     &=\tau(hs_nt_{n}^{i_2}s_{n+1}s_nt_{n+3}^{i_3}\cdots t_{n+k}^{i_k})&(s_{n+1}t_n=t_ns_{n+1})\\
     &=\tau(hs_nt_{n}^{i_2}s_{n+1}t_{n+3}^{i_3}\cdots t_{n+k}^{i_k}s_n)&(\text{The proof of Lemma}~\ref{Lemm:Type-IV})\\
     &=\tau((s_nhs_nt_{n}^{i_2})s_{n+1}t_{n+3}^{i_3}\cdots t_{n+k}^{i_k})&(\tau \text{ is a trace})\\
     &=z\tau(s_nhs_nt_{n}^{i_2}t_{n+2}^{i_3}\cdots t_{n+k-1}^{i_k})&(\text{Induction})\\
     &=z\tau(hs_nt_{n}^{i_2}t_{n+2}^{i_3}\cdots t_{n+k-1}^{i_k}s_n)&(\tau \text{ is a trace})\\
     &=z\tau(hs_nt_{n}^{i_2}s_nt_{n+2}^{i_3}\cdots t_{n+k-1}^{i_k})&(\text{The proof of Lemma}~\ref{Lemm:Type-IV})\\
     &=z\tau(ht_{n+1}^{i_2}t_{n+2}^{i_3}\cdots t_{n+k-1}^{i_k}).
   \end{align*}
   The second can be proved by an analogue computation.
\end{proof}

\section{Non-normalized Markov traces}\label{Sec:BK-trace}
In this section we  construct a $\mathbb{C}(\boldsymbol{u})$--linear function on $H_n(\boldsymbol{u})$ by using the standard bases and show it constitutes a non-normalized Markov trace, thereby proving Theorem~\ref{Them:Markov-BK} from the introduction.

 The following easy verified fact will be useful.
\begin{lemma}\label{Lemm:J-s_n}
For integers $1 \leq k\leq m-1$ and $n \geq 1$, we have
\begin{align*}
J_{n+1}^k s_n &= s_n J_n^k + \sum_{i=0}^{k-1} J_{n+1}^{k-1-i} J_n^i.
\end{align*}
\end{lemma}
\begin{proof}Apply induction  on $k$ and $k=1$ follows by applying Eq.~\eqref{Equ:JM-H}. Assume that it holds for $k-1\geq 1$. For $k$,
  \begin{align*}
J_{n+1}^{k} s_n &= J_{n+1}^{k-1}(s_nJ_n+1)\\
&= \biggl(s_n J_n^{k-1} + \sum_{i=0}^{k-2} J_{n+1}^{k-2-i} J_n^i\biggr)J_n+J_{n+1}^{k-1}\\
&=s_nJ_n^{k}+\sum_{i=0}^{k-2} J_{n+1}^{k-2-i} J_n^{i+1}+J_{n+1}^{k-1}\\
&= s_n J_n^k + \sum_{i=0}^{k-1} J_{n+1}^{k-1-i} J_n^i.
\end{align*}
The proof is completed.
\end{proof}
\begin{lemma}\label{Lemm:Bimodules-iso-JM}
Define the bimodule map
\[\psi_n: H_n(\boldsymbol{u}) \otimes_{H_{n-1}(\boldsymbol{u})} H_n(\boldsymbol{u}) \oplus \biggl(\bigoplus_{i=0}^{m-1} H_n(\boldsymbol{u})\biggr) \to H_{n+1}(\boldsymbol{u})\]
by
\[a \otimes b \oplus \biggl(\bigoplus_{i=0}^{m-1} a_i\biggr) \mapsto a s_n b + \sum_{i=0}^{m-1} a_i J_{n+1}^i.\]
Then $\psi_n$ is an $H_n(\boldsymbol{u})$-bimodule isomorphism.
\end{lemma}
\begin{proof}
Analogous to Lemma~\ref{Lemm:Bimodules-iso}'s proof using standard basis arguments.
\end{proof}

Given $z, y_1, \ldots, y_{m-1}\in \mathbb{C}(\boldsymbol{u})$, we inductively define a $\mathbb{C}(\boldsymbol{u})$-linear function $\mathrm{Tr}$ on $H_\infty(\boldsymbol{u})$ as follows. Assume $\mathrm{Tr}$ is defined on $H_n(\boldsymbol{u})$. For $x \in H_{n+1}(\boldsymbol{u})$ decomposed via Lemma~\ref{Lemm:Bimodules-iso-JM} as
\[x = \psi_n\biggl(a \otimes b \oplus \biggl(\bigoplus_{k=0}^{m-1} \alpha_k J_{n+1}^k\biggr)\biggr),\]
we define
\begin{equation}\label{Equ:tr-def-JM}
\mathrm{Tr}(x) :=z\cdot\mathrm{Tr}(ab) + \mathrm{Tr}(\alpha_0) + \sum_{k=1}^{m-1} \mathrm{Tr}(J_{n+1}^k) \mathrm{Tr}(\alpha_k),
\end{equation}
where $\mathrm{Tr}(1) = 0$ and $\mathrm{Tr}(J_{n+1}^k)$ determined by:
\begin{itemize}
\item[(M4)] $\mathrm{Tr}(h s_n J_{n}^k s_n) = \mathrm{Tr}(J_{n}^k) \mathrm{Tr}(h)$ for all $h \in H_n(\boldsymbol{u})$ and for all $1 \leq k \leq m-1$;
\item[(M5)] $\mathrm{Tr}(J_1^k) =y_k$ for all $k=1,\ldots,m-1$.
\end{itemize}
Let us remark that (M4) and Lemma~\ref{Lemm:J-s_n} show that
\begin{equation*}
  \mathrm{Tr}(J_{n+1}^k)=\mathrm{Tr}(J_{n}^k)+\sum_{i=0}^{k-1} \mathrm{Tr}(J_n^iJ_{n+1}^{k-1-i}s_n)
\end{equation*}
for $1\leq k\leq m-1$, $n=1,2,\ldots$.

\begin{proposition}\label{Prop:Tr-M}
The $\mathbb{C}(\boldsymbol{u})$--linear function $\mathrm{Tr}$ satisfies
\begin{enumerate}
\item[(M1)] $\mathrm{Tr}(s_n) = z$ for all $n \geq 1$;
\item[(M3)] $\mathrm{Tr}(x s_n y) = z\cdot\mathrm{Tr}(xy)$ for all $x,y \in H_n(\boldsymbol{u})$;
\item[(M4')] $\mathrm{Tr}(s_n J_n^k s_n) = \mathrm{Tr}(J_n^k)$ for  $1 \leq k \leq m-1$ and for all $n\geq 1$.
\end{enumerate}
\end{proposition}
\begin{proof}(M1, M3) follows directly from Eq.~\eqref{Equ:tr-def-JM} via bimodule structure, while (M4') is immediate from (M4).
\end{proof}

\begin{remark}
This trace construction specializes to the Brundan--Kleshchev trace $\tau_{\mathrm{BK}}$ through parameter specialization, analogous to \cite[Lemma~4.3]{GIM}. The non-normalized condition $\mathrm{Tr}(1)=0$ distinguishes it from the normalized Markov traces while preserving essential conjugation properties.
\end{remark}

We now show that the $\mathbb{C}(\boldsymbol{u})$--linear function $\mathrm{Tr}$ defined by Eq.~\eqref{Equ:tr-def-JM} is a trace on $H_{\infty}(\boldsymbol{u})$, i.e.,
\begin{equation*}
    \mathrm{Tr}(\alpha\beta) = \mathrm{Tr}(\beta\alpha) \quad \text{ for all } \alpha,\beta \in H_{\infty}(\boldsymbol{u}).
\end{equation*}
Our approach follows arguments parallel to those in Section~\ref{Sec:Markov-traces}.

\begin{lemma}\label{Lemm:tr(xJy)=tr(xy)}
For any $x,y \in H_n(\boldsymbol{u})$ and $1 \leq k\leq m-1$,
\begin{equation*}
    \mathrm{Tr}(xJ_{n+1}^k y) = \mathrm{Tr}(J_{n+1}^k) \mathrm{Tr}(xy).
\end{equation*}
\end{lemma}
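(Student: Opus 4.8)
The plan is to reduce the two-sided expression $xJ_{n+1}^ky$ to a one-sided one by sliding $J_{n+1}^k$ to the right past $y$, and then to read off the value of $\mathrm{Tr}$ directly from its defining recursion Equ.~(\ref{Equ:tr-def-JM}).

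First I would record the key structural fact that $J_{n+1}$ commutes with every element of $H_n(\bu)$. Since $H_n(\bu)$ is generated by $t=J_1$ together with $s_1,\ldots,s_{n-1}$, it suffices to check this on the generators. Lemma~\ref{Lem:si-Jj}(iii) gives $J_{n+1}J_1=J_1J_{n+1}$, and Lemma~\ref{Lem:si-Jj}(ii), applied with $j=n+1$, gives $s_iJ_{n+1}=J_{n+1}s_i$ for every $i\leq n-1$ (as then $i\neq n,\,n+1$). Hence $J_{n+1}$, and therefore each power $J_{n+1}^k$, lies in the centralizer of $H_n(\bu)$ inside $H_{n+1}(\bu)$. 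In particular, for $x,y\in H_n(\bu)$ one has
\[
xJ_{n+1}^ky=xyJ_{n+1}^k,
\]
with $xy\in H_n(\bu)$.

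Next I would invoke the definition of $\mathrm{Tr}$. Writing $\alpha=xy\in H_n(\bu)$, the element $\alpha J_{n+1}^k\in H_{n+1}(\bu)$ is already in the inductive normal form supplied by Lemma~\ref{Lemm:Bimodules-iso-JM}: its only nonzero component under $\psi_n$ is the $J_{n+1}^k$-coefficient $\alpha$, with vanishing $s_n$-part and vanishing lower $J_{n+1}$-coefficients. Reading Equ.~(\ref{Equ:tr-def-JM}) off this decomposition yields exactly $\mathrm{Tr}(\alpha J_{n+1}^k)=\mathrm{Tr}(J_{n+1}^k)\mathrm{Tr}(\alpha)$, which is the instance of the defining rule recorded in (M4). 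Combining the two displays gives
\[
\mathrm{Tr}(xJ_{n+1}^ky)=\mathrm{Tr}(\alpha J_{n+1}^k)=\mathrm{Tr}(J_{n+1}^k)\mathrm{Tr}(xy),
\]
as desired.

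The only point that needs care --- rather than a genuine obstacle --- is justifying that the defining relation $\mathrm{Tr}(\alpha J_{n+1}^k)=\mathrm{Tr}(J_{n+1}^k)\mathrm{Tr}(\alpha)$ holds for an arbitrary $\alpha\in H_n(\bu)$ and not merely for basis elements; this follows from the $\mathbb{K}$-linearity of $\mathrm{Tr}$ together with the fact that $\psi_n$ is an $H_n(\bu)$-bimodule isomorphism (Lemma~\ref{Lemm:Bimodules-iso-JM}), which guarantees that the decomposition of $\alpha J_{n+1}^k$ is the expected one for every $\alpha$. One should also confirm the harmless consistency that $\mathrm{Tr}$ restricted to $H_n(\bu)\subset H_{n+1}(\bu)$ agrees with the previously constructed trace on $H_n(\bu)$, so that the symbol $\mathrm{Tr}(xy)$ is unambiguous.
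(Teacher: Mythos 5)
Your proposal is correct and takes essentially the same route as the paper: both arguments rest on the fact that $J_{n+1}^k$ commutes with all of $H_n(\bu)$ (the paper checks this on standard basis elements $J_1^{a_1}\cdots J_n^{a_n}w$ via Lemma~\ref{Lem:si-Jj}(ii, iii), whereas you check it on the generators $t,s_1,\ldots,s_{n-1}$, a mild streamlining), and then read off $\mathrm{Tr}\bigl(xyJ_{n+1}^k\bigr)=\mathrm{Tr}(J_{n+1}^k)\mathrm{Tr}(xy)$ from the decomposition of Lemma~\ref{Lemm:Bimodules-iso-JM} and the defining formula Equ.~(\ref{Equ:tr-def-JM}). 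If anything, your version is tidier, since it avoids the paper's unnecessary reduction to the case $w=w's_{n-1}$.
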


\begin{proof}
Let $w \in \mathfrak{S}_n$ be expressed in Jones normal form:
\begin{align*}
    w = (s_{i_1}s_{i_1-1}\cdots s_{k_1})(s_{i_2}s_{i_2-1}\cdots s_{k_2}) \cdots (s_{i_r}s_{i_r-1}\cdots s_{k_r}),
\end{align*}
where $0 < i_1 < \cdots < i_r < n$, $0 < k_1 < \cdots < k_r \leq n-1$, and $i_j \geq k_j$ for all $j$. By Eq.~\eqref{Equ:Standard-basis}, we may assume that $y = J_1^{a_1}\cdots J_n^{a_n}w' s_{n-1}$, where $0 \leq a_i\leq m-1$ and $w' \in \mathfrak{S}_{n-1}$.

Applying Lemma~\ref{Lem:si-Jj}(ii, iii) for commutation relations:
\begin{align*}
    \mathrm{Tr}(xJ_{n+1}^k y) &= \mathrm{Tr}(xJ_{n+1}^k J_1^{a_1}\cdots J_n^{a_n}w's_{n-1}) \\
    &= \mathrm{Tr}(xJ_1^{a_1}\cdots J_n^{a_n} w's_{n-1} J_{n+1}^{k}) \quad \text{(Lemma~\ref{Lem:si-Jj})} \\
    &= \mathrm{Tr}(xy) \mathrm{Tr}(J_{n+1}^k). 
\end{align*}
It completes the proof.
\end{proof}

\begin{lemma}\label{Lemm:JM-xs_nys_n}
For sll $x,y\in H_{n}(\boldsymbol{u})$,
\begin{equation*}
    \mathrm{Tr}(xs_nys_n) = \mathrm{Tr}(s_nxs_ny).
\end{equation*}
In particular, for all $1\leq k\leq m-1$,
\begin{equation*}
    \mathrm{Tr}(J_n^ks_ns_{n-1}s_n) = \mathrm{Tr}(s_nJ_n^{k}s_ns_{n-1}).
\end{equation*}
\end{lemma}

\begin{proof}
The second equality follows from the first by taking $x = J_n^k$ and $y = s_{n-1}$. We establish the first equality through case by case analysis:

\noindent\textbf{Case 1.} $x,y \in H_{n-1}(\boldsymbol{u})$:
\begin{align*}
    \mathrm{Tr}(xs_nys_n) &= \mathrm{Tr}(xy) = \mathrm{Tr}(s_nxs_ny).
\end{align*}
\noindent\textbf{Case 2.} $x \in H_{n-1}(\boldsymbol{u})$, $y = \alpha s_{n-1}\beta$ where $\alpha,\beta\in H_{n-1}(\boldsymbol{u})$, or vice versa:
\begin{align*}
    \mathrm{Tr}(xs_nys_n) &=\mathrm{Tr}(x\alpha(s_ns_{n-1}s_n)\beta)=\mathrm{Tr}(s_nxs_ny).
\end{align*}
\noindent\textbf{Case 3.} $x = \alpha s_{n-1}\beta$, $y = \delta s_{n-1}\gamma$ where $\alpha,\beta,\delta,\gamma\in H_{n-1}(\boldsymbol{u})$:
\begin{align*}
    \mathrm{Tr}(xs_nys_n) &=\mathrm{Tr}(\alpha s_{n-1}\beta\delta(s_ns_{n-1}s_n)\gamma)= z^2\,\mathrm{Tr}(\alpha\beta\delta\gamma), \\
    \mathrm{Tr}(s_nxs_ny) &=\mathrm{Tr}(\alpha (s_ns_{n-1}s_n)\beta\delta s_{n-1}\gamma)= z^2\,\mathrm{Tr}(\alpha\beta\delta\gamma).
\end{align*}
\noindent\textbf{Case 4.} $x \in H_{n-1}(\boldsymbol{u})$, $y = \alpha J_n^{\ell}$ where $\alpha\in H_{n-1}(\boldsymbol{u})$ and $1\leq\ell\leq m-1$, or vice versa:
\begin{align*}
    \mathrm{Tr}(xs_nys_n)&=\mathrm{Tr}(x\alpha s_nJ_n^{\ell}s_n)=
    \mathrm{Tr}(J_n^{\ell})\mathrm{Tr}(x\alpha), \\
    \mathrm{Tr}(s_nxs_ny) &=\mathrm{Tr}(x\alpha J_n^{\ell})= \mathrm{Tr}(J_n^{\ell})\mathrm{Tr}(x\alpha).
\end{align*}
\noindent\textbf{Case 5.} $x = \alpha J_n^{\ell}$, $y = \beta J_n^k$ where $\alpha,\beta\in H_{n-1}(\boldsymbol{u})$ and $1\leq k, \ell\leq m-1$:
\begin{align*}
 \mathrm{Tr}(xs_nys_n)&= \mathrm{Tr}(\alpha\beta J_n^{\ell}s_nJ_n^ks_n)=\mathrm{tr}(J_n^{k})\mathrm{tr}(J_n^{\ell})\mathrm{Tr}(\alpha\beta), \\
\mathrm{Tr}(s_nxs_ny)&= \mathrm{Tr}(\alpha s_nJ_n^{\ell}s_nJ_n^k\beta)=\mathrm{tr}(J_n^{\ell})\mathrm{Tr}(J_n^k)\mathrm{Tr}(\alpha\beta).
\end{align*}

\noindent\textbf{Case 6.} $x = \alpha s_{n-1}\beta$, $y = \gamma J_n^{\ell}$ where $\alpha,\beta,\gamma\in H_{n-1}(\boldsymbol{u})$, or vice versa:
\begin{align*}
    \mathrm{Tr}(xs_nys_n) &=\mathrm{Tr}(\alpha s_{n-1}\beta\gamma s_n J_n^{\ell}s_n)= z\,\mathrm{Tr}(J_n^{\ell})\mathrm{Tr}(\alpha\beta\gamma), \\
    \mathrm{Tr}(s_nxs_ny) &=\mathrm{Tr}(\alpha s_ns_{n-1}s_n\beta\gamma J_n^{\ell})= z\,\mathrm{Tr}(J_n^{\ell})\mathrm{Tr}(\alpha\beta\gamma).
\end{align*}
All cases demonstrate the required equality, thus completing the proof.
\end{proof}

\begin{lemma}\label{Lemm:JM-Typer-III}
For any $h_n \in H_n(\boldsymbol{u})$, $1 \leq i\leq n-1$, and $1 \leq k\leq m-1$,
\begin{align*}
    \mathrm{Tr}(h_n s_n s_{n-1} \cdots s_i J_i^k s_n) &=
    \mathrm{Tr}(s_n h_n s_n s_{n-1} \cdots s_i J_i^k), \\
    \mathrm{Tr}(h_n s_n s_{n-1} \cdots s_i J_i^k t) &=
     \mathrm{Tr}(t h_n s_n s_{n-1} \cdots s_i J_i^k).
\end{align*}
\end{lemma}
\begin{proof}
For the \textbf{first equality}, we have
\begin{align*}
    \mathrm{Tr}(h_n s_n s_{n-1} \cdots s_i J_i^k s_n)
    &= \mathrm{Tr}(h_n s_n s_{n-1} s_n s_{n-2} \cdots s_i J_i^k) \\
    &= z\,\mathrm{Tr}(h_n s_{n-2} \cdots s_i J_i^k).
\end{align*}
For \(\mathrm{Tr}(s_n h_n s_n s_{n-1} \cdots s_i J_i^k)\), we apply the case by case analysis:

\underline{Case 1.} \(h_n \in H_{n-1}(\boldsymbol{u})\):
\begin{align*}
    \mathrm{Tr}(s_n h_n s_n s_{n-1} \cdots s_i J_i^k)
    &= \mathrm{Tr}(h_n s_{n-1} \cdots s_i J_i^k) \\
    &= z\,\mathrm{Tr}(h_n s_{n-2} \cdots s_i J_i^k).
\end{align*}

\underline{Case 2.} \( h_n = \alpha s_{n-1} \beta \) where \( \alpha, \beta \in H_{n-1}(\boldsymbol{u}) \):
\begin{align*}
    \mathrm{Tr}(s_n h_n s_n \cdots s_i J_i^k)
    &= \mathrm{Tr}(\alpha s_n s_{n-1} s_n \beta s_{n-1} \cdots s_i J_i^k) \\
    &= z^2\,\mathrm{Tr}(\alpha \beta s_{n-2} \cdots s_i J_i^k) \\
    &= z\,\mathrm{Tr}(h_n s_{n-2} \cdots s_i J_i^k).
\end{align*}

\underline{Case 3.} $h_n = \alpha J_n^\ell$ where
$\alpha \in H_{n-1}(\boldsymbol{u}), 1 \leq \ell\leq m -1$:
\begin{align*}
    \mathrm{Tr}(s_n h_n s_n \cdots s_i J_i^k)
    &= \mathrm{Tr}(\alpha (s_n J_n^\ell s_n) s_{n-1} \cdots s_i J_i^k) \\
    &= z\,\mathrm{Tr}(J_n^\ell) \mathrm{Tr}(\alpha s_{n-2} \cdots s_i J_i^k) \\
    &= z\,\mathrm{Tr}(h_n s_{n-2} \cdots s_i J_i^k).
\end{align*}
Thus, the first equality holds.

\textbf{Second Equality:} By induction on $n$. It is trivial for $n = 1$. Assume it is true for $n \geq 1$. For $n+1$, we have
\begin{align*}
    \mathrm{Tr}(h_{n+1} s_{n+1} \cdots s_i J_i^k t)
    &= z\,\mathrm{Tr}(h_{n+1} s_{n} \cdots s_i J_i^k t) \quad \text{(by (M3))} \\
    &= z\,\mathrm{Tr}(t h_{n+1} s_{n} \cdots s_i J_i^k) \quad \text{(inductive hypothesis)} \\
    &= \mathrm{Tr}(t h_{n+1} s_{n+1} \cdots s_i J_i^k).
\end{align*}
This completes the proof.
\end{proof}
\begin{lemma}\label{Lemm:JM-Type-IV}For any $h_{n}\in H_{n}(\boldsymbol{u})$, $i=1, \ldots, n$, and $1\leq k\leq m-1$, we have
\begin{eqnarray*}
\mathrm{Tr}(h_{n}J_{n+1}^kt)&=&\mathrm{Tr}(th_{n}J_{n+1}^k),\\
\mathrm{Tr}(h_{n}J_{n+1}^ks_i)&=&\mathrm{Tr}(s_ih_{n}J_{n+1}^k).
\end{eqnarray*}
\end{lemma}
\begin{proof}Thanks to Lemma~\ref{Lemm:s-t}(vi),
\begin{align*}
\mathrm{Tr}(h_{n}J_{n+1}^kt)&=\mathrm{Tr}(J_{n+1}^k)\mathrm{Tr}(h_{n}t)=\mathrm{Tr}(J_{n+1}^k)
\mathrm{Tr}(th_{n})=\mathrm{Tr}(th_{n}J_{n+1}^k),
\end{align*}
where the second equality follows by the induction argument on $n$.  Thus  the first equality holds.

Now we show the second equality holds.
For  $i=1, \ldots, n-1$,  \begin{align*}
\mathrm{Tr}(h_{n}J_{n+1}^ks_i)=\mathrm{Tr}(J_{n+1}^k)\mathrm{Tr}(h_{n}s_i)
=\mathrm{Tr}(J_{n+1}^k)\mathrm{Tr}(s_ih_{n})
=\mathrm{Tr}(s_ih_{n}J_{n+1}^k),
\end{align*}
where the middle equality uses the induction hypothesis $\mathrm{Tr}(h_{n}s_i)=\mathrm{Tr}(s_ih_{n})$.

Now Lemma~\ref{Lemm:J-s_n} and Eq.~\eqref{Equ:tr-def-JM} show
\begin{align*}
\mathrm{Tr}(h_{n}J_{n+1}^ks_n)&=z\,\mathrm{Tr}(h_{n}J_n^{k})+
\sum_{i=0}^{k-1}\mathrm{Tr}(h_nJ_{n}^iJ_{n+1}^{k-1-i}).
\end{align*}

On the other hand,  if $h_{n}\in H_{n-1}(\boldsymbol{u})$ then $s_nh_n=h_ns_n$ and Lemma~\ref{Lemm:J-s_n} shows \begin{eqnarray*}
\mathrm{Tr}(s_nh_{n}J_{n+1}^k)&=&z\,\mathrm{Tr}(h_{n}J_n^{k})+
\sum_{i=0}^{k-1}\mathrm{tr}(h_nJ_{n}^iJ_{n+1}^{k-1-i}).
\end{eqnarray*}

If $h_{n}=\alpha s_{n-1}\beta\in H_{n}(\boldsymbol{u})$ where $\alpha,\beta\in H_{n-1}(\boldsymbol{u})$, then
\begin{align*}\mathrm{Tr}(s_n\alpha s_{n-1}\beta J_{n+1}^k)
&=\mathrm{Tr}(\alpha (s_n J_{n+1}^k)s_{n-1}\beta)\\
&=z\,\mathrm{Tr}(\alpha J_n^{k}s_{n-1}\beta)+
\sum_{i=0}^{k-1}\mathrm{Tr}(\alpha J_{n}^iJ_{n+1}^{k-1-i}s_{n-1}\beta)\\
&=z\,\mathrm{Tr}(h_{n}J_n^{k})+
\sum_{i=0}^{k-1}\mathrm{Tr}(h_nJ_{n}^iJ_{n+1}^{k-1-i}),
\end{align*}
where the last equality follows by an induction argument.

If $h_{n}=\alpha J_{n}^{\ell}\in H_{n}(\boldsymbol{u})$ where $\alpha\in H_{n-1}(\boldsymbol{u})$ and $1\leq \ell\leq m-1$, then
\begin{align*}
\mathrm{Tr}(s_n\alpha J_n^{\ell} J_{n+1}^k)&=\mathrm{Tr}(\alpha (s_n J_{n+1}^k)J_n^{\ell})\\
&=z\,\mathrm{Tr}(\alpha J_n^{k+\ell})+\sum_{i=0}^{k-1}\mathrm{Tr}(\alpha J_{n}^iJ_{n+1}^{k-1-i}J_n^{\ell})\\
&=z\,\mathrm{Tr}(h_{n}J_n^{k})+
\sum_{i=0}^{k-1}\mathrm{Tr}(h_nJ_{n}^iJ_{n+1}^{k-1-i}).
\end{align*}
Thus $\mathrm{Tr}(h_{n}J_{n+1}^ks_n)=\mathrm{Tr}(s_nh_{n}J_{n+1}^k)$ holds in all cases.  It completes the proof.
\end{proof}

We are ready to show $\mathrm{Tr}$ is a trace function on $H_{\infty}(\boldsymbol{u})$.

\begin{theorem}\label{Them:ab=ba}For all $a,b\in H_{\infty}(\boldsymbol{u})$,
\begin{align*}\mathrm{Tr}(ab)=\mathrm{Tr}(ba).
\end{align*}
  \end{theorem}

\begin{proof}We verify $\mathrm{Tr}(ax)=\mathrm{Tr}(xa)$ inductively. Assume it holds for all $a, b\in H_{n}(\boldsymbol{u})$. For $a, b\in H_{n+1}(\boldsymbol{u})$, it suffices to check when $b$ is a generator $s_i$ ($1\leq i\leq n$) or $t$, that is, \begin{align*}(*)\qquad\quad\mathrm{Tr}(at)=\mathrm{Tr}(ta) \text{ and } \mathrm{Tr}(as_i)=\mathrm{Tr}(s_ia)\text{ for }1\leq i\leq n.
\end{align*}
Thanks to Theorem~\ref{Them:Inductive-basis}, we only need to check that Eq.~$(*)$ holds when $a$ is an element of the four forms (I)---(IV):
 If $a$ is an element of form (I), i.e., $a=h_n\in H_n(\boldsymbol{u})$, then the induction hypothesis gives
\begin{equation*}
\mathrm{Tr}(h_nt)=\mathrm{Tr}(th_n)\text{ and }\mathrm{Tr}(h_ns_i)=\mathrm{Tr}(s_ih_n)\text{ for }i=1, \ldots, n-1.
\end{equation*}
 Further Proposition~\ref{Prop:Tr-M}(M3) shows
\begin{equation*}
  \mathrm{Tr}(as_n)=z\,\mathrm{Tr}(a)=\mathrm{Tr}(s_na).
\end{equation*}
If $a$ is an element of forms (II)---(IV), the assertion follows directly form Lemmas~\ref{Lemm:JM-xs_nys_n}, \ref{Lemm:JM-Typer-III}, and  \ref{Lemm:JM-Type-IV}.
\end{proof}

Now we can prove Theorem~\ref{Them:Markov-BK} in Introduction.
\begin{proof}[Proof of Theorem~\ref{Them:Markov-BK}] (M1), (M3)--(M5) follow directly by Eq.~\eqref{Equ:tr-def-JM} and (M2) follows by applying Theorem~\ref{Them:ab=ba}. Note that having proved the existence, the uniqueness of $\mathrm{Tr}$ follows immediately. Indeed Theorem~\ref{Them:Inductive-basis} shows that $\mathrm{Tr}(x)$ can be clearly computed inductively using rules (M1)--(M4) for any $x\in H_{n+1}(\boldsymbol{u})$.
\end{proof}

\section{Specializations}
This section is devoted to investigating the specializations of (non-)normalized Markov traces. In particular, we obtain the Brou\'{e}--Malle--Michel symmetrizing trace on $H_n(\boldsymbol{u})$ and show that the Brundan--Kleshchev trace on $H_n(\boldsymbol{u})$ is a specialization of the non-normalized Markov trace.

We begin with the following fact.
\begin{lemma}\label{Lemm:t-trace}Let $\mathrm{tr}$ be the normalized Markov trace on $H_n(\boldsymbol{u})$ with parameters $z=0$, $y_1, \ldots, y_{m-1}\in \mathbb{C}(\boldsymbol{u})$. For any $w\in \mathfrak{S}_n-\{1\}$ and $0\leq a_1, \ldots, a_n\leq m-1$, we have
 \begin{align*}
\mathrm{tr}(t_1^{a_1}t_2^{a_2}\cdots t_n^{a_n}w)=0.
 \end{align*}
  \end{lemma}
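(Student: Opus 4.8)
The plan is to induct on $n$, the base case $n=1$ being vacuous because $\mathfrak{S}_1=\{1\}$ contains no $w\neq 1$. For the inductive step I set $T'=t_1^{a_1}\cdots t_{n-1}^{a_{n-1}}\in H_{n-1}(\bu)$ and study $x=T't_n^{a_n}w$. The guiding idea is that with $z=0$ the defining recursion Equ.~(\ref{Equ:tr-def})---equivalently, the combination of (m2) and (m3)---forces $\mathrm{tr}$ to vanish on every inductive-basis word of types (b) and (c) of Theorem~\ref{Them:Basis-for-Markov}, since each such word has the form $a\,s_{n-1}\,b$ with $a,b\in H_{n-1}(\bu)$ and no $t_n$-component, whence $\mathrm{tr}(a s_{n-1} b)=z\,\mathrm{tr}(ab)=0$. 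By contrast, words of types (a) and (d) give $\mathrm{tr}(h_{n-1})$ and $\mathrm{tr}(h_{n-1}t_n^k)=y_k\mathrm{tr}(h_{n-1})$ with $h_{n-1}\in H_{n-1}(\bu)$, so on these the problem descends to $H_{n-1}(\bu)$. Thus the whole proof reduces to writing $x$ in inductive form and recording which type occurs.

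To carry this out I first use the coset decomposition $\mathfrak{S}_n=\bigsqcup_{j=1}^{n}\mathfrak{S}_{n-1}c_j$ with $c_n=1$ and $c_j=s_{n-1}s_{n-2}\cdots s_j$ for $1\le j\le n-1$, writing $w=w'c_j$ with $w'\in\mathfrak{S}_{n-1}$; here $w\in\mathfrak{S}_{n-1}$ exactly when $j=n$, and then $w'=w\neq 1$. The relevant commutation facts, all consequences of Lemma~\ref{Lemm:s-t}(i),(ii), are: every generator $s_1,\ldots,s_{n-2}$ of $\mathfrak{S}_{n-1}$ commutes with $t_n$, so $w't_n=t_nw'$; and, starting from $t_{a+1}s_a=s_at_a$, a short induction on the exponent yields $t_n^{a_n}s_{n-1}s_{n-2}\cdots s_j=s_{n-1}s_{n-2}\cdots s_j\,t_j^{a_n}$. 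Combining these gives $x=(T'w')t_n^{a_n}$ when $j=n$ and $x=(T'w')s_{n-1}\cdots s_j\,t_j^{a_n}$ when $j\le n-1$, where $T'w'\in H_{n-1}(\bu)$.

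It then remains to read off the type. If $w\notin\mathfrak{S}_{n-1}$ (that is, $j\le n-1$), then $x$ is a word of type (b) when $a_n=0$ and of type (c) when $1\le a_n<m$, so $\mathrm{tr}(x)=0$ by the vanishing noted above. If $w\in\mathfrak{S}_{n-1}$ (that is, $j=n$, $w'=w\neq 1$), then $x$ is of type (a) when $a_n=0$, giving $\mathrm{tr}(x)=\mathrm{tr}(T'w)$, and of type (d) when $a_n\ge 1$, giving $\mathrm{tr}(x)=y_{a_n}\mathrm{tr}(T'w)$; in both cases $T'w=t_1^{a_1}\cdots t_{n-1}^{a_{n-1}}w$ with $w\in\mathfrak{S}_{n-1}-\{1\}$, so $\mathrm{tr}(T'w)=0$ by the inductive hypothesis. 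I expect the only delicate point to be the conjugation identity $t_n^{a_n}s_{n-1}\cdots s_j=s_{n-1}\cdots s_j\,t_j^{a_n}$, which is exactly what converts the ``off-diagonal'' permutation part into a type-(c) word whose trace vanishes; everything else is bookkeeping over the cases $a_n=0$ versus $a_n\ge 1$ and $w\in\mathfrak{S}_{n-1}$ versus $w\notin\mathfrak{S}_{n-1}$.
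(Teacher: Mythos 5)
Your proof is correct and is essentially the paper's own argument in slightly different packaging: the coset factorization $w=w'c_j$ with $c_j=s_{n-1}\cdots s_j$ extracts exactly the top block of the Jones normal form that the paper uses, and both proofs then hinge on the same two facts, namely the commutation $t_n^{a_n}s_{n-1}\cdots s_j=s_{n-1}\cdots s_j\,t_j^{a_n}$ and the vanishing $\mathrm{tr}(\alpha s_{n-1}\beta)=z\,\mathrm{tr}(\alpha\beta)=0$ when $z=0$, with induction (via rule (m4)) handling the case $w\in\mathfrak{S}_{n-1}$. Your reformulation in terms of the type (a)--(d) words of Theorem~\ref{Them:Basis-for-Markov} is a clean way to organize the case analysis, but it is not a genuinely different route.
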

 \begin{proof}Any reduced word $w\in\mathfrak{S}_n-\{1\}$ can be expressed in Jones' normal form:
 \begin{equation*}
  w=(s_{i_r}s_{i_r-1}\cdots s_{k_r})\cdots(s_{i_1}s_{i_1-1}\cdots s_{k_1}),
 \end{equation*}
   where $1\leq i_1<\cdots<i_r\leq n-1$, $1\leq k_1<\cdots<k_r\leq n-1$ and $i_j\geq k_j$ for all $j$.

  If $i_r<{n-1}$ then Theorem~\ref{Them:Markov}(m4) shows \begin{align*}\mathrm{tr}(t_1^{a_1}t_2^{a_2}\cdots t_n^{a_n}w)=y_{a_n}\mathrm{tr}(t_1^{a_1}t_2^{a_2}\cdots t_{n-1}^{a_{n-1}}w).\end{align*}
 So we may assume that $i_r=n-2$ and let $w'=s_{i_r}w$. Then $w'\in\mathfrak{S}_{n-2}$ and \begin{align*}
 \mathrm{tr}(t_1^{a_1}t_2^{a_2}\cdots t_{n-1}^{a_{n-1}}w)&=\mathrm{tr}\left(t_1^{a_1}t_2^{a_2}\cdots t_{n-2}^{a_{n-2}}s_{n-2}t_{n-2}^{a_{n-1}}w'\right)\\
 &=z\,\mathrm{tr}\left(t_1^{a_1}t_2^{a_2}\cdots t_{n-2}^{a_{n-2}+a_{n-1}}w'\right)\\
 &=0.
 \end{align*}
 If $i_r=n-1$ then $w'\in\mathfrak{S}_{n-1}$ and
 \begin{align*}
 \operatorname{tr}(t_1^{a_1}t_2^{a_2}\dotsm t_{n-1}^{a_{n-1}}w)
 &=\operatorname{tr}(t_1^{a_1}t_2^{a_2}\dotsm t_{n-1}^{a_{n-1}}s_{n-1}t_{n-1}^{a_n}w')\\
 &=z\operatorname{tr}(t_1^{a_1}t_2^{a_2}\dotsm t_{n-1}^{a_{n-1}+a_n}w')\\
 &=0.
 \end{align*}
In both cases, the trace vanishes under $z=0$. Thus, $\mathrm{tr}(t_1^{a_1}t_2^{a_2}\cdots t_n^{a_n}w)=0$ for all non-trivial $w$. This completes the proof.
 \end{proof}

The following result is an immediate consequence of Lemma~\ref{Lemm:t-trace}:

\begin{corollary}\label{Cor:t-trace}
Let $\mathrm{tr}_{\boldsymbol{0}}$ be the normalized Markov trace on $H_n(\boldsymbol{u})$ with parameters $z = y_1 = \cdots = y_{m-1} = 0$. Then $\mathrm{tr}_{\boldsymbol{0}}$ is a trace on $H_n(\boldsymbol{u})$ satisfying:
\begin{equation*}
  \mathrm{tr}_{\boldsymbol{0}}(t_1^{a_1} \cdots t_n^{a_n}w) =\begin{cases}
1, & \text{if } a_1 = \cdots = a_n = 0 \text{ and } w = 1; \\
0, & \text{otherwise}.
\end{cases}
\end{equation*}
\end{corollary}

\begin{proof}
Since $\mathrm{tr}_{\boldsymbol{0}}(1) = 1$ by definition, it remains to show:
\[
\mathrm{tr}_{\boldsymbol{0}}(t_1^{a_1} \cdots t_n^{a_n}w) = 0 \quad \text{for all } w \in \mathfrak{S}_n-\{1\} \text{ or } a_i \neq 0 \text{ (some } i\text{)}.
\]

\textit{Case 1:} When $w \in \mathfrak{S}_n-\{1\}$, Lemma~\ref{Lemm:t-trace} directly gives $\mathrm{tr}_{\boldsymbol{0}}(t_1^{a_1} \cdots t_n^{a_n}w) = 0$.

\textit{Case 2:} If $a_i \neq 0$ for some $i$, let $i$ be maximal with $a_i \neq 0$. By Theorem~\ref{Them:Markov} (m4), we have
\begin{equation*}
  \mathrm{tr}_{\boldsymbol{0}}(t_1^{a_1} \cdots t_n^{a_n}) = y_{a_i} \mathrm{tr}_{\boldsymbol{0}}(t_1^{a_1} \cdots t_{i-1}^{a_{i-1}}) = 0.
\end{equation*}
This completes the proof.
\end{proof}

\begin{remark}
This specialization $\mathrm{tr}_{\boldsymbol{0}}$, which is referred to as the Brou\'{e}--Malle--Michel symmetrizing trace on $H_n(\boldsymbol{u})$, closely resembles the canonical symmetric trace on cyclotomic Hecke algebras of type $G(m,1,n)$ defined by Bremke and Malle \cite{BM}. Their trace emerges as a specialization of the Markov trace for cyclotomic Hecke algebras, as detailed in \cite[Lemma~4.3]{GIM}. It is a natural question to determine the Schur elements of the degenerate cyclotomic Hecke algebras with respect to the Brou\'{e}--Malle--Michel symmetrizing trace.
\end{remark}

Let $\mathrm{tr}$ be the normalized Markov trace on $H_{\infty}(\boldsymbol{u})$ with parameters $z, y_1,\ldots, y_{m-1}$. It would be interesting to determine $\mathrm{tr}(J_1^{a_1}J_2^{a_2}\cdots J_i^{a_i}w)$ for all $1\leq a_i\leq m-1$,  for all $w\in _i$, and for all $i\geq 1$. For example, it is easy to verify that
\begin{equation*}
 \mathrm{tr}(J_1J_2\cdots J_i)=\prod_{j=1}^i\mathrm{tr}(J_i)=\prod_{j=1}^{i-1}(y_1+(j-1)z)
\end{equation*}
for $i=1,2\ldots$.
Furthermore, for positive integers $n,k$, and for all $h\in H_{n}(\boldsymbol{u})$, we have
\begin{align*}
    \mathrm{tr}(hJ_{n+1}\cdots J_{n+k})&=\mathrm{tr}(h)\prod_{i=1}^k(y_1+(n+i-1)z).
\end{align*}
Now assume that $z=0$, for positive integers $n,k$, and for all $h\in H_{n}(\boldsymbol{u})$, we have
\begin{equation*}
 \mathrm{tr}(hJ_{n+1}^2\cdots J_{n+k}^2)=\mathrm{tr}(h)\prod_{i=1}^{k}(y_2+n+i-1).
\end{equation*}

The following specialization of the non-normalized Markov traces will be helpful.
\begin{lemma}\label{Lemm:Specialization}Let $\mathrm{Tr}_0$ be the non-normalized Markov trace on $H_n(\boldsymbol{u})$ with parameters $z=0$ and $y_1, \ldots, y_{m-1}\in\mathbb{C}(\boldsymbol{u})$.  For $1\leq k\leq m-1$ and $1\leq i\leq n$,
  \begin{eqnarray*}\mathrm{Tr}_0(J_i^{k})&=&\mathrm{Tr}_0(J_{i-1}^{k})
+k\sum_{\ell=0}^{k-1}\mathrm{Tr}_0(J_{i}^{k-2-\ell}J_{i-1}^{\ell})-
\sum_{j=0}^{k-1}\sum_{\ell=0}^{j-1}\mathrm{Tr}_0(J_{i}^{k-2-\ell}J_{i-1}^{\ell}).
  \end{eqnarray*}
\end{lemma}
\begin{proof}Clearly it is trivial when $i=1$. For $i>1$, using Eq.~(\ref{Equ:JM-H}) and Lemma~\ref{Lemm:J-s_n}, we derive
\begin{align*}
J_{i}^{k}&=s_{i-1}J_{i-1}^{k}s_{i-1}+\sum_{j=0}^{k-1}J_{i}^{k-1-j}J_{i-1}^js_{i-1}\\
&=s_{i-1}J_{i-1}^{k}s_{i-1}+\sum_{j=0}^{k-1}J_{i}^{k-1-j}
\biggl(s_{i-1}J_i^j-\sum_{\ell=0}^{j-1}J_i^{j-1-\ell}J_{i-1}^{\ell}\biggr)\\
&=s_{i-1}J_{i-1}^{k}s_{i-1}+\sum_{j=0}^{k-1}J_{i}^{k-1-j}
s_{i-1}J_i^j-\sum_{j=0}^{k-1}\sum_{\ell=0}^{j-1}J_{i}^{k-2-\ell}J_{i-1}^{\ell}.
\end{align*}
Thus Theorem~\ref{Them:Markov-BK} shows
\begin{align*}
\mathrm{Tr}(J_{i}^{k})&=\mathrm{Tr}(J_{i-1}^{k})+k\,\mathrm{Tr}(J_{i}^{k-1}s_{i-1})-
\sum_{j=0}^{k-1}\sum_{\ell=0}^{j-1}\mathrm{Tr}(J_{i}^{k-2-\ell}J_{i-1}^{\ell}).
\end{align*}
Again using Lemma~\ref{Lemm:J-s_n} and Theorem~\ref{Them:Markov-BK}, we have
\begin{align*}
\mathrm{Tr}(J_{i}^{k-1}s_{i-1})&=z\,\mathrm{Tr}(J_{i-1}^{k})+
\sum_{\ell=0}^{k-1}\mathrm{Tr}(J_{i}^{k-2-\ell}J_{i-1}^{\ell}).
\end{align*}
Therefore, we yield
\begin{align*}
\mathrm{Tr}_0(J_{i}^{k})&=\mathrm{Tr}_0(J_{i-1}^{k})
+k\sum_{\ell=0}^{k-1}\mathrm{Tr}_0(J_{i}^{k-2-\ell}J_{i-1}^{\ell})-
\sum_{j=0}^{k-1}\sum_{\ell=0}^{j-1}\mathrm{Tr}_0(J_{i}^{k-2-\ell}J_{i-1}^{\ell}).
\end{align*}
It completes the proof.
\end{proof}

The following fact states the Brundan--Kleshchev trace $\tau_{\mathrm{BK}}$ is a specialization of the non-normalized Markov trace $\mathrm{Tr}$, which may viewed as a first step to the problem posed in \cite[Remark~5.7(ii)]{Z2}.
\begin{corollary}\label{Cor:BK-specialization}Let $\mathrm{Tr}_{\boldsymbol{0},1}$ be the non-normalized Markov trace on $H_n(\boldsymbol{u})$ with parameter $z=y_1=\cdots=y_{m-2}=0$ and $y_{m-1}=1$. Then $\mathrm{Tr}_{\boldsymbol{0},1}=\tau_{\mathrm{BK}}$.
\end{corollary}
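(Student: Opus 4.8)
The plan is to use linearity to reduce the identity $\mathrm{Tr}_{\boldsymbol{0},1}=\tau_{\mathrm{BK}}$ to a comparison on the standard basis $\mathfrak{B}_n$ of Equ.~(\ref{Equ:Standard-basis}). Thus it suffices to compute $\mathrm{Tr}_{\boldsymbol{0},1}(J_1^{a_1}\cdots J_n^{a_n}w)$ for $0\le a_i<m$ and $w\in\mathfrak{S}_n$, and to match it against $\tau_{\mathrm{BK}}(J_1^{a_1}\cdots J_n^{a_n}w)$, which equals $1$ exactly when $a_1=\cdots=a_n=m-1$ and $w=1$, and $0$ otherwise. I would split the verification into the diagonal case $w=1$ and the off-diagonal case $w\neq 1$.

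For the diagonal case I would invoke Lemma~\ref{Lemm:Specialization}. Its first identity, specialized at $z=0$, gives the factorization $\mathrm{Tr}_{\boldsymbol{0},1}(J_1^{a_1}\cdots J_n^{a_n})=\prod_{i=1}^{n}\mathrm{Tr}_{\boldsymbol{0},1}(J_i^{a_i})$, so it is enough to evaluate a single factor. Here I would run an induction on the exponent using the second identity of Lemma~\ref{Lemm:Specialization}, whose ``lower terms'' only involve powers of $J$ strictly below $a_i-1$: by (M1) one has $\mathrm{Tr}_{\boldsymbol{0},1}(J_i^0)=\mathrm{Tr}_{\boldsymbol{0},1}(1)=0$, while (M5) together with the specialization $y_1=\cdots=y_{m-2}=0$, $y_{m-1}=1$ gives $\mathrm{Tr}_{\boldsymbol{0},1}(J_1^{a_i})=y_{a_i}=\delta_{a_i,m-1}$; since every exponent occurring in the lower terms is $<m-1$, it vanishes by the inductive hypothesis, and one concludes $\mathrm{Tr}_{\boldsymbol{0},1}(J_i^{a_i})=\delta_{a_i,m-1}$. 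Multiplying out yields $\prod_i\delta_{a_i,m-1}$, in agreement with $\tau_{\mathrm{BK}}$ on all basis words with $w=1$.

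The remaining and decisive point is the off-diagonal case: one must show $\mathrm{Tr}_{\boldsymbol{0},1}(J_1^{a_1}\cdots J_n^{a_n}w)=0$ for every $w\in\mathfrak{S}_n-\{1\}$. I would establish this as a $J$-analogue of Lemma~\ref{Lemm:t-trace}, arguing by induction on $n$ after putting $w$ into Jones normal form. When $w\in\mathfrak{S}_{n-1}$, the element $J_n$ commutes with $w$ by Lemma~\ref{Lem:si-Jj}(ii), so Lemma~\ref{Lemm:tr(xJy)=tr(xy)} peels off the top factor as $\mathrm{Tr}(J_n^{a_n})\,\mathrm{Tr}_{\boldsymbol{0},1}(J_1^{a_1}\cdots J_{n-1}^{a_{n-1}}w)$ (the case $a_n=0$ reducing directly to $H_{n-1}(\bu)$), and the second factor is $0$ by the inductive hypothesis. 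When instead $w=(s_{n-1}s_{n-2}\cdots s_k)w''$ with $w''\in\mathfrak{S}_{n-1}$, I would commute $s_{n-1}$ leftward through $J_n^{a_n}$ by Lemma~\ref{Lemm:J-s_n}, splitting the word into a piece lying in $H_n(\bu)s_{n-1}H_n(\bu)$ — whose trace is $z\cdot\mathrm{tr}(\,\cdot\,)=0$ after a cyclic rotation by (M2) and (M3) — plus correction terms in which $s_{n-1}$ has disappeared and the power of the top Jucys--Murphy element has strictly dropped.

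The step I expect to be the main obstacle is precisely this last reduction. Commuting $s_{n-1}$ past $J_n^{a_n}$ does not simply annihilate the word: by Lemma~\ref{Lemm:J-s_n} it produces a sum of lower-order terms $J_n^{a_n-1-i}J_{n-1}^{i}$ carrying a shortened Weyl-group factor $s_{n-2}\cdots s_k\,w''\in\mathfrak{S}_{n-1}$, and one must control these with care — tracking that each genuinely off-diagonal correction again vanishes by induction while verifying that the terms which collapse to $w=1$ do not leave a spurious diagonal contribution. Managing the interplay between the dropping $J_n$-degree and the possibly rising $J_{n-1}$-degree (and the attendant reduction of exponents that may exceed $m-1$) is where the real work concentrates, and it is the part I would pin down most carefully before invoking the diagonal computation above to finish.
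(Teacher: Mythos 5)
Your proposal reproduces the paper's own strategy (diagonal case via Lemma~\ref{Lemm:Specialization} and (M5), off-diagonal case via Jones normal form and Lemma~\ref{Lemm:J-s_n}), but both steps it relies on contain genuine gaps, and neither can be repaired: with the axioms (M1)--(M5) as stated, the identity $\mathrm{Tr}_{\boldsymbol{0},1}=\tau_{\mathrm{BK}}$ is actually false on $H_n(\bu)$ for $n\geq 2$. In your diagonal case you extend the factorization of Lemma~\ref{Lemm:Specialization} to zero exponents by inserting factors $\mathrm{Tr}_{\boldsymbol{0},1}(J_i^0)=\mathrm{Tr}_{\boldsymbol{0},1}(1)=0$; but that lemma is stated only for exponents $1\leq a_i<m$, and the extension contradicts (M5). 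Concretely, the basis word with $a_1=m-1$, $a_2=\cdots=a_n=0$ and $w=1$ is the element $J_1^{m-1}$ itself, so (M5) forces $\mathrm{Tr}_{\boldsymbol{0},1}(J_1^{m-1})=y_{m-1}=1$, whereas $\tau_{\mathrm{BK}}(J_1^{m-1})=0$ in $H_n(\bu)$ once $n\geq 2$. This single element already refutes the corollary, and it exposes the structural obstruction: a Markov trace is one linear form on $H_{\infty}(\bu)$, hence takes the same value on $J_1^{m-1}$ inside every $H_n(\bu)$, while the Brundan--Kleshchev traces are not compatible with the inclusions $H_{n-1}(\bu)\subset H_n(\bu)$ (their value on $J_1^{m-1}$ is $1$ for $n=1$ and $0$ for $n\geq 2$).

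The off-diagonal step that you flag as ``the main obstacle'' is likewise not merely delicate but false: the spurious diagonal contributions you worry about really do survive. Take $x=J_1^{m-1}J_2s_1\in H_2(\bu)$. By Lemma~\ref{Lemm:J-s_n}, $J_2s_1=s_1J_1+1$, so $x=J_1^{m-1}s_1J_1+J_1^{m-1}$. By (M2) and (M3), $\mathrm{Tr}_{\boldsymbol{0},1}(J_1^{m-1}s_1J_1)=\mathrm{Tr}_{\boldsymbol{0},1}(J_1^{m}s_1)=z\,\mathrm{Tr}_{\boldsymbol{0},1}(J_1^{m})$, which vanishes at $z=0$, while the correction term contributes $\mathrm{Tr}_{\boldsymbol{0},1}(J_1^{m-1})=y_{m-1}=1$. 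Hence $\mathrm{Tr}_{\boldsymbol{0},1}(x)=1\neq 0=\tau_{\mathrm{BK}}(x)$: precisely the correction term whose power of the top Jucys--Murphy element drops to zero leaves a nonzero diagonal residue, so the inductive vanishing you intend to prove is unattainable. For what it is worth, the paper's own proof suffers from the same two defects --- it applies the product formula outside its stated range of validity, and in the displayed computation it silently discards the ``$+1$'' terms arising from $J_ns_{n-1}=s_{n-1}J_{n-1}+1$ --- so you have faithfully reconstructed its argument; but the step you honestly left open is exactly where both arguments, and the statement itself, break down.
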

\begin{proof}For $i=1, \ldots, n$, Lemma~\ref{Lemm:Specialization} shows   \begin{align*}\mathrm{tr}_{\boldsymbol{0},1}(J_{i}^{a_i})&=\left\{\begin{array}
  {ll}1,&\text{ if }a_i=m-1;\\
  0,&\text{ otherwise.}
\end{array}\right.\end{align*}
Therefore
\begin{align*}
 \mathrm{Tr}_{\boldsymbol{0},1}(J_1^{a_1}\cdots J_n^{a_n})=\left\{\begin{array}
  {ll}1,&\text{ if }a_1=\cdots=a_n=m-1;\\
  0,&\text{ otherwise.}
\end{array}\right.
\end{align*}
Note that any reduced word $w\in\mathfrak{S}_n-\{1\}$ can be expressed in Jones' normal form:
 \begin{equation*}
  w=(s_{i_r}s_{i_r-1}\cdots s_{i_r-k_r})\cdots(s_{i_1}s_{i_1-1}\cdots s_{i_1-k_1}),
 \end{equation*}
   where $1\leq i_1<\cdots<i_r\leq n-1$. Then Lemma~\ref{Lem:si-Jj} shows
  \begin{align*}
  J_1^{a_1}\cdots J_n^{a_n}w&=J_1^{a_1}\cdots J_{i_r}^{a_{i_r}}J_{i_r+1}^{a_{i_r+1}}s_{i_r}w'J_{i_r+2}^{a_{i_r+2}}\cdots J_n^{a_n},\end{align*}
  where $w'=s_{i_r}w$.
  Then Lemma~\ref{Lemm:J-s_n} shows
  \begin{align*}J_{i_r+1}^{a_{i_r+1}}s_{i_r}&= s_{i_r}J_{i_r}^{a_{i_r+1}}+\sum_{j=0}^{a_{i_r+1}-1}J_{i_r+1}^{a_{i_r+1}-1-j}J_{i_r}^j.
 \end{align*}
 Now Theorem~\ref{Them:Markov-BK} and $z=0$ show
 \begin{align*}
  \mathrm{Tr}_{\boldsymbol{0},1}(J_1^{a_1}\cdots J_n^{a_n}w)&=\sum_{j=0}^{a_{i_r+1}-1}\mathrm{Tr}_{\boldsymbol{0},1}(J_1^{a_1}\cdots J_{i_r}^{a_{i_r}+j}w'J_{i_r+1}^{a_{i_r+1}-1-j}J_{i_r+2}^{a_{i_r+2}}\cdots J_n^{a_n})\\
  &=\sum_{j=0}^{a_{i_r+1}-1}\mathrm{Tr}_{\boldsymbol{0},1}(J_1^{a_1}\cdots J_{i_r}^{a_{i_r}+j}w')\mathrm{Tr}_{\boldsymbol{0},1}(J_{i_r+1}^{a_{i_r+1}-1-j}J_{i_r+2}^{a_{i_r+2}}\cdots J_n^{a_n}).\end{align*}
 Thus apply the induction argument and trace properties iteratively show each term vanishes due to $z=0$ and Lemma~\ref{Lemm:Specialization}.
 It completes the proof.
\end{proof}

\subsection*{Acknowledgements} The authors are grateful to the anonymous referees  for their  numerous valuable comments, suggestions and corrections that have led to several improvements in both content and presentation.
\section*{Declaration}\noindent\textbf{Data Availability} Data sharing not applicable to this article as no datasets were generated or analysed during the current study.

\noindent\textbf{Conflict of Interest} There is no conflict of interest.


\end{document}